\DeclarePairedDelimiter\floor{\lfloor}{\rfloor}
\numberwithin{equation}{section}
\DeclarePairedDelimiter\abs{\lvert}{\rvert}%
\newtheorem{theorem}{Theorem}[section]
\newtheorem{lemma}[theorem]{Lemma}
\newtheorem{assumption}{Assumption}
\theoremstyle{definition}
\newtheorem{definition}[theorem]{Definition}
\DeclareMathOperator*{\argmin}{argmin \,}
\title{Multi-resolution low-rank tensor formats}
 \author{Oscar Mickelin}
\address{Department of Mathematics, Massachusetts Institute of Technology, Massachusetts, USA}
\email{oscarmi@mit.edu}
\author{Sertac Karaman}
\address{Department of Aeronautics and Astronautics, Massachusetts Institute of Technology, Massachusetts, USA}
\email{sertac@mit.edu}
\thanks{This work was supported in part by the Army Research Office grant no. W911NF1510249 and the ARL DCIST program.}
\subjclass[2010]{Primary 65F99, 15A69}
\keywords{Tensors, tensor-train format, multi-resolution linear algebra.}
\begin{document}
\maketitle

\begin{abstract}
We describe a simple, black-box compression format for tensors with a multiscale structure. By representing the tensor as a sum of compressed tensors defined on increasingly coarse grids, we capture low-rank structures on each grid-scale, and we show how this leads to an increase in compression for a fixed accuracy. We devise an alternating algorithm to represent a given tensor in the multiresolution format and prove local convergence guarantees. In two dimensions, we provide examples that show that this approach can beat the Eckart-Young theorem, and for dimensions higher than two, we achieve higher compression than the tensor-train format on six real-world datasets. We also provide results on the closedness and stability of the tensor format and discuss how to perform common linear algebra operations on the level of the compressed tensors.
\end{abstract}

\section{Introduction}
High-dimensional data is often represented with tensors in $\mathbb{R}^{n_1 \times \ldots \times n_d}$. When the dimensions $n_k$ are large, compressed tensor formats are often used to cope with the large storage requirements and operational costs. The tensor-train format \cite{oseledets2011tensor} and the canonical decomposition \cite{kolda2009tensor} are two formats that have received significant attention over the last decade. These formats represent tensors using different black-box low-rank expansions, and achieve significant reduction in storage costs when the associated rank is low.

In this article, we will be interested in tensors that are not necessarily low-rank in either of these formats, namely tensors with multiple length-scales. Even when the contribution to the tensor from each length-scale has low rank, their combination might be of significantly higher rank. Multiscale data is essential in numerous scientific and engineering problems \cite{deMoraes2019,lee2017stochastic,lee2015multiscale,zou2005multiscale}. In many cases, the memory limitations, particularly on emerging edge computing devices, require that large-scale multiscale data is compressed for storage and for processing in compressed formats. 

There have been a few approaches to compressing multiscale tensors in the literature. In one approach \cite{ozdemir2017multiscale,ozdemir2016multiscale,ozdemir2017multi}, the authors compress a given tensor into a tree structure of compressed subtensors, by recursively subdividing a tensor into local blocks on different scales and decomposing each block on each scale in the tree in the Tucker format using the HOSVD \cite{de2000multilinear}. A similar approach has been pursued for matrices \cite{ong2016beyond}, where a convex nuclear-norm relaxation is used to recover the local low-rank structures. Wu et al. \cite{WXY:07b,wu2008hierarchical} consider tensors representing visual data, which are partitioned into blocks on increasingly finer scales. All blocks on each scale are represented with a common basis in the Tucker format, to capture global correlations of a locally repeating structure. For fast, GPU-accelerated and interactive visualization of data, previous work \cite{SIMAEZGGP:11,SMP:13,BSP:18} has also considered subsampling a given tensor decomposition at runtime, into a desired visualization resolution. The data is again represented by dividing the tensor into local blocks, which are then compressed. Khoromskij and Khoromskaia \cite{khoromskij2009multigrid} present different multigrid-inspired techniques with the goal of improving the speed of decomposition into the Tucker format, rather than decreasing the storage size of the resulting decomposition. A subsampling onto coarser grids identifies indices carrying the most information, after which alternating least squares algorithms can be run on these subindices. A related, but distinct approach is the quantized tensor train-approximation; we refer to e.g., a recent monograph \cite{khoromskij2018tensor} for more information.

We will consider a simple approach to improving the compression ratios of tensors that exhibit a multiscale structure. This approach complements the existing literature, in that our approach captures multiscale tensors where each scale gives a non-local contribution to the tensor. Starting from a tensor given in a compressed format, we introduce a graded structure on the domain and capture low-rank information on different scales. These scales have spatial resolution with increasing coarseness, which leads to a corresponding decrease in the representation cost. This multiresolution format enables us to achieve higher compression while maintaining a given accuracy. In the case of matrices, we show how this makes it possible to beat the Eckart-Young theorem \cite{golub2013matrix}, by achieving lower approximation error than the truncated singular value decomposition, for a given storage cost. We devise an alternating algorithm for decomposing a tensor into the multiresolution format, where the tensor can be provided in either full format or the underlying, non-multiresolution tensor format. We prove a local convergence result of a slightly restructured version of this algorithm. We also discuss the closedness of the multiresolution format and how to perform common linear algebra operations in the format in ways that respects the graded structure.

The remainder of the article is structured as follows. Section~\ref{sec:notation} explains our notation and introduces operators that convert tensors between differently coarse grids. Section~\ref{sec:format} motivates and introduces the multiresolution format, and Section~\ref{sec:closedness} contains results on closedness of the multiresolution format. Section~\ref{sec:alg} describes an alternating algorithm for computing a decomposition in the multiresolution format and Appendix~\ref{sec:local} proves local convergence of a slightly restructured algorithm. In Section~\ref{sec:operations}, we discuss how to perform common tensor operations on the level of the compressed tensors, and we conclude by studying the performance of the multiresolution format for several examples of real-world tensors in Section~\ref{sec:applications}. Implementations of all algorithms in this paper are publicly available online. \footnote{\href{https://github.com/MultiResTF/multiresolution}{\texttt{https://github.com/MultiResTF/multiresolution}}}

\section{Notation}\label{sec:notation}
Throughout the article, we will refer to a number of low-rank tensor formats, for instance the tensor-train format, the hierarchical format \cite{hackbusch2012tensor}, the canonical decomposition \cite{kolda2009tensor}, or orthogonally constrained canonical decompositions \cite{chen2009tensor,anandkumar2014tensor}. We will denote the set of tensors represented in a general format by $\mathcal{F}$. Each of these comes with a corresponding notion of rank, which we will denote by $\text{rank}_{\mathcal{F}}$. We denote by $\mathcal{F}_r$ the set of tensors in $\mathcal{F}$ with corresponding rank no greater than $r$. For the canonical decomposition, $r$ is a positive integer, and for the tensor-train format, $r$ is a vector of positive integers and inequalities between these vectors are interpreted element-wise. We will also consider low-rank matrices, for which the tensor-train format and canonical format coincide with the ordinary low-rank matrix format. For tensor formats $\mathcal{F}$ that are weakly closed, there is an optimal approximation $T_{\text{opt}}$ in $\mathcal{F}_{\mathbf{r}}$ to any tensor $T$ \cite[Thm.~4.28]{hackbusch2012tensor}, and we will denote this by $\text{round}_{\mathcal{F}_\mathbf{r}}(T)$. For the tensor-train format, the TT-SVD procedure \cite[Alg.~1]{oseledets2011tensor} efficiently produces a quasi-optimal tensor $\widetilde{T}$ in $\mathcal{F}_{\mathbf{r}}$, i.e., satisfying $\|T - \widetilde{T}\| \leq \sqrt{d-1}\|T - T_{\text{opt}}\|$. $T$ can be given either in full format or in the TT-format. For certain tensor formats, e.g., the tensor-train format, it is also possible to instead specify an error bound $\varepsilon$. A rounding procedure then produces an approximation $\widetilde{T}$ with lower rank than $T$, guaranteed to satisfy $\|T - \widetilde{T}\| \leq \varepsilon \|T\|$. We will denote this procedure by $\text{round}_\mathcal{F}(T, \varepsilon)$. We will denote the inner product between two tensors $T$ and $S$ in $\mathbb{R}^{n_1 \times \ldots \times n_d}$ by
\begin{equation}
\langle T, S \rangle := \sum_{i_1 =1}^{n_1} \ldots \sum_{i_d=1}^{n_d} T(i_1, \ldots , i_d)S(i_1, \ldots , i_d),
\end{equation}
and the Frobenius norm of $T$ is defined to be $\|T\| := \sqrt{\langle T, T\rangle}$.

Throughout this paper, we will fix a batch size $b_s \in \mathbb{N}$ with $b_s \geq 2$. We will decompose tensors into a sum of tensors defined on grids with increasingly coarse resolution. The $k$:th coarsest level will consist of tensors constant on blocks with side length $b_s^k$. The parameter $b_s$ therefore controls the resolution of the subsequent grids, and therefore also defines a grid-refinement scheme defined in the next paragraph. We will in the following consider $L$ levels, with the coarsest one having blocks of size $b_s^L$. This therefore requires that $b_s^L$ divides $n_1, n_2, \ldots , n_d$. In order to describe this construction in detail, we introduce the following two operations, which will be heavily used in what follows.

Given a tensor $T \in \mathbb{R}^{ b_s^k \times \ldots \times b_s^k}$ and a positive integer $\ell$, we define the block-extended tensor $\text{ext}_\ell(T) \in \mathbb{R}^{ b_s^{k+\ell} \times \ldots \times b_s^{k+\ell}}$ by
\begin{equation}
\text{ext}_\ell(T)(i_1, \ldots , i_d) = T\left(\floor*{\frac{i_1-1}{b_s^\ell}}+1, \ldots, \floor*{\frac{i_d-1}{b_s^\ell}}+1\right),
\end{equation}
i.e., the tensor obtained by replacing each entry of $T$ by a block with side lengths $b_s^\ell$, where each entry equals the replaced entry of $T$. As an example, if $T$ is a $2\times2$-tensor, and $S$ is a $2\times 2 \times 2$-tensor defined by
\begin{equation}
T = \left[\begin{array}{@{}cc@{}}
a & b  \\ c & d \end{array}\right], \qquad S = \left[\begin{array}{@{}cc|cc@{}}
a & b & e & f \\ c & d & g & h \end{array}\right],
\end{equation}
and if $b_s = 2$, then their first extended tensors are given by
\begin{align}
\arraycolsep=2.45pt  \text{ext}_1(T) = \begin{bmatrix}
a & a & b & b \\
a & a & b & b \\ 
c & c & d & d \\
c & c & d & d
\end{bmatrix}\!\!, \,\,\,\, \arraycolsep=2.45pt \text{ext}_1(S) = \left[\begin{array}{@{}cccc|cccc|cccc|cccc@{}}
a &a & b &b & a &a & b &b & e &e & f &f & e &e & f & f \\
a &a & b &b & a &a & b &b & e &e & f &f & e &e & f & f \\
c &c & d &d &c & c & d &d & g &g & h&h& g &g & h& h \\
c &c & d &d &c & c & d &d & g &g & h&h& g &g & h& h
 \end{array}\right],
\end{align}
respectively. Similarly, we denote the left inverse of $\text{ext}_\ell$ by $\text{ave}_\ell$. For a tensor $S \in \mathbb{R}^{ b_s^{k+\ell} \times \ldots \times b_s^{k+\ell}}$, $\text{ave}_\ell(S) \in \mathbb{R}^{b_s^{k} \times \ldots \times b_s^{k}}$ is defined by
\begin{equation}
\text{ave}_\ell (S)(i_1, \ldots , i_d)  = \frac{1}{b_s^{\ell d}}\sum_{j_1 = 0}^{b_s^{\ell} - 1} \ldots \sum_{j_d = 0}^{b_s^{\ell} - 1} S(i_1 + j_1, \ldots , i_d + j_d).
\end{equation}
Clearly
\begin{equation}\label{eq:extprop}
\begin{split}
\text{ave}_{\ell_1 + \ell_2}(T) = \text{ave}_{\ell_1}  &\left( \text{ave}_{\ell_2}(T) \right), \quad  \text{ext}_{\ell_1 + \ell_2}(T) = \text{ext}_{\ell_1}  \left( \text{ext}_{\ell_2}(T) \right), \\
\text{ave}_\ell(\text{ext}_\ell(T)) &= T, \quad \text{ave}_\ell(\text{ext}_k(T)) = \text{ext}_{k-\ell}(T),
\end{split}
\end{equation}
for any integers $\ell$, $\ell_1$, $\ell_2$ and $k$, with $ \ell \leq k$. These operations will allow us to convert tensors into finer or coarser grids. 

\section{Tensor format}\label{sec:format}
Let $T$ be a tensor in $\mathbb{R}^{ b_s^L \times \ldots \times b_s^L}$, for some positive integer $L$. We will approximate $T$ by a sum of subtensors defined on grids with increasing coarseness. Each subtensor will be represented in a compressed tensor format denoted by $\mathcal{F}$. We make the following definition.
\begin{definition}
Let $\mathbf{r} = (r_0, \ldots, r_L)$ be a vector of rank bounds for each grid-scale. For any compressed tensor format $\mathcal{F}$, we define the multiresolution $\mathcal{F}_\mathbf{r}$-format by
\begin{equation}
\text{MS}_{\mathcal{F}_\mathbf{r}}= \left\{ T: T = \sum_{k=0}^L \text{ext}_{L-k}(T_k), T_k \in \mathcal{F}_{r_k}, T_k \in \mathbb{R}^{b_s^k \times \ldots \times b_s^k}\right\}.
\end{equation}
To represent a tensor in $MS_{\mathcal{F}_\mathbf{r}}$, we only need to store the $L+1$ tensors $T_k$, for $0 \leq k \leq L$. We will say that $T$ has the multiresolution representation $(T_0, \ldots , T_L)$.
\end{definition}
 The motivation behind the definition is that storing as much information as possible on coarser scales decreases the total storage cost of the tensor, since fewer grid points need to be kept in memory as compared to the finest scale. The multiresolution format uses the operator $\text{ext}_{L-k}$, instead of a more smooth interpolation operator as is common in multigrid methods, since we will see in Section~\ref{sec:alg} that this allows for a simple algorithm to approximate a given tensor in $\mathcal{F}_{\mathbf{r}}$.
 
Since the format $ \text{MS}_{\mathcal{F}_\mathbf{r}}$ contains rank-$r_L$ approximations on the finest scale, $\mathcal{F}_{r_L} \subseteq \text{MS}_{\mathcal{F}_\mathbf{r}}$, and the multiresolution format contains any tensor for $r_L$ large enough. The contribution $\text{ext}_{L-k}(T_k)$ on each scale is contained in $\mathcal{F}_{r_k}$. When $\mathcal{F}$ is either the tensor train format or the canonical decomposition, it is therefore also the case that $\text{MS}_{\mathcal{F}_\mathbf{r}} \subseteq \mathcal{F}_{r_0 + \ldots + r_L}$. However, the storage cost of a tensor $T$ in $\text{MS}_{\mathcal{F}_\mathbf{r}}$ is lower than that of a tensor in $\mathcal{F}_{r_0 + \ldots + r_L}$ since the corresponding tensors $T_k$ are compressed versions of tensors on the smaller index set $\mathbb{R}^{b_s^k \times \ldots \times b_s^k}$, instead of on the the larger index set $\mathbb{R}^{b_s^L \times \ldots \times b_s^L}$.

For a wide range of accuracies, our examples in Section~\ref{sec:applications} will show that approximations in the multiresolution format can often require lower storage costs than in $\mathcal{F}$. Note however that we do not expect any storage gains when representing a tensor to machine precision in the format $\text{MS}_{\mathcal{F}_\mathbf{r}}$, as compared to storing the tensor in the format $\mathcal{F}_{r_L}$. The following example explains why this is the case.
\subsection{Motivating example}\label{sec:motivating_example}
We consider a function with multiple length-scales, for instance
\begin{equation}\label{eq:motivating_example}
f(x) =  \prod_{k=1}^d\sin\left(x_k\right) +  \prod_{k=1}^d\sin\left(2x_k\right)+  \prod_{k=1}^d\sin\left(4x_k\right).
\end{equation}
We let $T$ be the grid-discretization of $f$ on the interval $[0,\pi]$ using a uniform grid with a total of $n$ grid points in each dimension. $T$ therefore has a canonical representation
\begin{equation}\label{eq:ONex}
T = \bigotimes_{k=1}^d u_k +  \bigotimes_{k=1}^d v_k + \bigotimes_{k=1}^d w_k,
\end{equation}
where $u_k$ is a discretization of $\sin(x_k)$ on the interval $[0,\pi]$, $v_k$ of $\sin(2x_k)$ and $w_k$ of $\sin(4x_k)$. We then have $\langle u_k, v_k\rangle = 0 = \langle u_k , w_k \rangle = \langle v_k , w_k \rangle$ when $n$ is odd. Eq.~\eqref{eq:ONex} therefore describes an orthogonal canonical decomposition \cite{kolda2001orthogonal}. An optimal rank $2$-approximation of $T$ is then obtained by keeping the two terms in Eq.~\eqref{eq:ONex} with the largest norms \cite{zhang2001rank}. In our case, the terms have equal norms so we retain any two terms, e.g., $\bigotimes_{k=1}^d v_k +  \bigotimes_{k=1}^d w_k$. The square of the approximation error is
\begin{equation}
\|\bigotimes_{k=1}^d u_k\|^2 = \left(\frac{n}{\pi} \int_0^\pi \sin^2(x)  \text{d}x  +\mathcal{O}(1) \right)^d= \left( \frac{n}{2} \right)^d + \mathcal{O}(n^{d-1}).
\end{equation}
A possible (but not necessarily optimal) multiresolution approximation with $b_s = 2$ would be $\widetilde{T} = \widetilde{T}_1 + \text{ext}_1(\widetilde{T}_2) + \text{ext}_2(\widetilde{T}_3)$, where $\widetilde{T}_1 = \bigotimes_{k=1}^d w_k$, $\widetilde{T}_2 = \bigotimes_{k=1}^d \text{ave}_1(v_k)$, and $\widetilde{T}_3 = \bigotimes_{k=1}^d \text{ave}_2(u_k)$. Since $\text{ave}_1(v_k) \in \mathbb{R}^{\frac{n}{2}}$ and $\text{ave}_2(u_k) \in \mathbb{R}^{\frac{n}{4}}$, the cost of storing $\widetilde{T}_1$, $\widetilde{T}_2$, and $\widetilde{T}_3$ is less than storing the optimal rank-2 approximation. By the following result, this also results in far lower approximation error.

\begin{theorem}
Let $\omega_1, \omega_2, \ldots , \omega_r$ be an increasing sequence of positive real numbers and $f$ a function with multiple length-scales, written in the form
\begin{equation}
f(x) = \sum_{k=1}^r \prod_{j=1}^d g_{kj}\left( \frac{x_j}{\omega_k}\right),
\end{equation}
where $\| g'_{kj}(x_j) \|_{\infty} \leq C_{kj}$ for all $k,j$. Let $T$ be the corresponding discretization on the hypercube $[a,b]^d$ with $n = b_s^L$ uniform grid points in each dimension, i.e., $T = \sum_{k=1}^r T_k$ with $T_k = \bigotimes_{j=1}^d u_{kj}$. The multiresolution canonical approximation $\widetilde{T} = \widetilde{T}_1 + \sum_{k=2}^r \text{ext}_{k-1}(\widetilde{T}_{k})$ with $\widetilde{T}_{k} = \bigotimes_{j=1}^d \text{ave}_{k-1}\left(u_{k,j}\right)$ then satisfies
\begin{equation}\label{eq:bound_mot}
\|T- \widetilde{T}\| \leq \sum_{k=2}^r \delta_k \|T_k\|,
\end{equation}
where $\delta_k = \left[ \prod_{j=1}^d \left(1 + (b-a)\frac{C_{k,j}\sqrt{ \frac{1}{60}\left(7b_s^{2(k-1)} - 15 + 8b_s^{-2(k-1)}\right)}} {\sqrt{n}\omega_{k}\|u_{k,j}\|} \right) \right] - 1.$ For large $n$, the right hand side of Eq.~\eqref{eq:bound_mot} is approximately equal to 
\begin{equation}
\sum_{k=2}^r\sum_{j=1}^d (b-a)\frac{C_{k,j} \sqrt{ \frac{1}{60}\left(7b_s^{2(k-1)} - 15 + 8b_s^{-2(k-1)}\right)} }{\sqrt{n}\omega_{k}\|u_{k,j}\|}  \|T_k\|.
\end{equation}
\end{theorem}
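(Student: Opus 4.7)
The plan is to reduce the multi-dimensional bound to a one-dimensional $\ell^2$ estimate on block-averaging a sampled Lipschitz function, then carry out that estimate by hand. Since $\text{ave}_0$ is the identity, $\widetilde{T}_1 = T_1$, and hence
\[
\|T-\widetilde{T}\| \;\leq\; \sum_{k=2}^{r}\bigl\|T_k - \text{ext}_{k-1}(\widetilde{T}_k)\bigr\|
\]
by the triangle inequality. The block-extension operator commutes with tensor products: $\text{ext}_{k-1}\bigl(\bigotimes_j v_j\bigr) = \bigotimes_j \text{ext}_{k-1}(v_j)$, and so each summand compares two rank-one tensors $T_k = \bigotimes_j u_{k,j}$ and $\bigotimes_j \widetilde{u}_{k,j}$, where $\widetilde{u}_{k,j} := \text{ext}_{k-1}(\text{ave}_{k-1}(u_{k,j}))$. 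Telescoping through the hybrids $w_\ell := \widetilde{u}_{k,1}\otimes\cdots\otimes \widetilde{u}_{k,\ell}\otimes u_{k,\ell+1}\otimes\cdots\otimes u_{k,d}$ and using the triangle inequality $\|\widetilde{u}_{k,\ell}\|\leq \|u_{k,\ell}\|\bigl(1+\|u_{k,\ell}-\widetilde{u}_{k,\ell}\|/\|u_{k,\ell}\|\bigr)$ yields the clean product estimate
\[
\|T_k-\text{ext}_{k-1}(\widetilde{T}_k)\| \;\leq\; \|T_k\|\biggl[\prod_{j=1}^{d}\Bigl(1+\tfrac{\|u_{k,j}-\widetilde{u}_{k,j}\|}{\|u_{k,j}\|}\Bigr)-1\biggr],
\]
so only a scalar bound on $\|u_{k,j}-\widetilde{u}_{k,j}\|$ remains.

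For that key step, set $m:=b_s^{k-1}$ and $h:=(b-a)/n$, and fix a block of $m$ consecutive entries with mean $\bar u$. The assumption $\|g'_{k,j}\|_\infty \leq C_{k,j}$ gives the discrete Lipschitz bound $|u_{k,j,i}-u_{k,j,i'}| \leq C_{k,j} h |i-i'|/\omega_k$. Starting from $|u_i-\bar u|\leq m^{-1}\sum_{\ell=0}^{m-1}|u_i-u_\ell|$ and squaring entrywise gives
\[
\sum_{i=0}^{m-1}(u_i-\bar u)^2 \;\leq\; \frac{C_{k,j}^{2} h^{2}}{\omega_k^{2} m^{2}}\sum_{i=0}^{m-1}\biggl(\sum_{\ell=0}^{m-1}|i-\ell|\biggr)^{\!2}.
\]
The crucial algebraic identity is $\sum_{i=0}^{m-1}\bigl(\sum_{\ell=0}^{m-1}|i-\ell|\bigr)^{2} = (7m^{5}-15m^{3}+8m)/60$, obtained by splitting the inner sum at $\ell=i$, evaluating each half by the standard power-sum formulas for $\sum k$, $\sum k^{2}$, $\sum k^{3}$, $\sum k^{4}$, and collecting terms; this produces the unusual coefficient in the theorem. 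Multiplying by the $n/m$ blocks and substituting back $m=b_s^{k-1}$ and $h=(b-a)/n$ yields
\[
\|u_{k,j}-\widetilde{u}_{k,j}\|^{2} \;\leq\; \frac{(b-a)^{2} C_{k,j}^{2}}{60\, n\, \omega_{k}^{2}}\bigl(7 b_s^{2(k-1)} - 15 + 8 b_s^{-2(k-1)}\bigr),
\]
which, when inserted into the previous display, is exactly $\delta_k\,\|T_k\|$.

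The asymptotic statement for large $n$ follows by a first-order Taylor expansion $\prod_j(1+\epsilon_{k,j}) - 1 = \sum_j \epsilon_{k,j} + O(\epsilon^{2})$ around zero, since each relative error $\epsilon_{k,j} = \|u_{k,j}-\widetilde{u}_{k,j}\|/\|u_{k,j}\|$ is $O(n^{-1/2})$: the numerator is $O(1)$ by the previous display, while $\|u_{k,j}\|\sim\sqrt n$ via the Riemann-sum interpretation of $\sum_i g_{k,j}(x_i/\omega_k)^{2}$, so the quadratic remainder is dominated. The main obstacle I anticipate is the power-sum computation in the middle paragraph: it is elementary but bookkeeping-heavy, and it is the sole place where the coefficient $(7b_s^{2(k-1)}-15+8b_s^{-2(k-1)})/60$ appears. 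Everything else reduces to the triangle inequality, the product commutation of $\text{ext}_{k-1}$, and the standard telescoping identity for rank-one perturbations.
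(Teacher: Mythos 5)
Your proof follows the same route as the paper: the block-average Lipschitz estimate and the power-sum identity $\sum_{i}(\sum_\ell|i-\ell|)^2 = (7m^5-15m^3+8m)/60$ are exactly the paper's core computation, and the product bound you rederive by telescoping is precisely what the paper obtains by citing Hackbusch's sensitivity formula for the canonical decomposition (Prop.~7.10). One small slip in the asymptotic discussion: your own display shows $\|u_{k,j}-\widetilde u_{k,j}\| = O(n^{-1/2})$, not $O(1)$, so in fact $\epsilon_{k,j}=O(n^{-1})$; this is stronger than you claim, and still justifies the first-order Taylor expansion.
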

\begin{proof}
A sensitivity formula for the canonical decomposition \cite[Prop.~7.10]{hackbusch2012tensor} gives the error bound in Eq.~\eqref{eq:bound_mot} with
\begin{equation}
\delta_k = \left[ \prod_{j=1}^d \left(1 + \frac{\|u_{k,j} - \text{ext}_{k-1}\left(\text{ave}_{k-1}(u_{k,j})\right)\|}{\|u_{k,j}\|} \right) \right] - 1,
\end{equation}
and we only need to bound the quantity on the right. Fix now the indices $k$ and $j$. For any batch $B$ of indices of length $b_s^{k-1}$, the average of $u_{k,j}$ over $B$ is $\frac{1}{b_s^{k-1}}\sum_{m\in B}u_{k,j}(m)$. We have
\begin{equation}\label{eq:error_mot}
\begin{split}
\|u_{k,j} - \text{ext}_{k-1}\left(\text{ave}_{k-1}(u_{k,j})\right)\|^2_F
= \sum_{\text{B}} \sum_{i \in B} \left(u_{k,j}(i) - \frac{1}{b_s^{k-1}}\sum_{m\in B}u_{k,j}(m)  \right)^2 \\= \sum_{\text{B}} \sum_{i \in B} \frac{1}{b_s^{2(k-1)}}\left(\sum_{m\in B} \left[u_{k,j}(i) - u_{k,j}(m)\right]  \right)^2.
\end{split}
\end{equation}
Now
\begin{equation}
\begin{split}
\abs*{u_{k,j}(i) - u_{k,j}(m)} = \abs*{g_{k,j}\left( \frac{i(b-a)}{n\omega_{k}}\right) - g_{k,j}\left( \frac{m(b-a)}{n\omega_{k}}\right)} \\ \leq 
\frac{(b-a)\abs{i-m} C_{k,j}}{n\omega_{k}} .
\end{split}
\end{equation}
Inserting this into Eq.~\eqref{eq:error_mot} results in 
\begin{equation}\label{eq:almost_done_mot}
\begin{split}
\|u_{k,j} - \text{ext}_{k-1}\left(\text{ave}_{k-1}(u_{k,j})\right)\|^2_F 
\leq  \sum_{\text{B}} \sum_{i \in B} \frac{(b-a)^2C_{k,j}^2}{n^2\omega_{k}^2b_s^{2(k-1)}}\left(\sum_{m\in B} \abs{i-m}  \right)^2 \\
= b_s^{L-k+1}\frac{(b-a)^2C_{k,j}^2}{n^2\omega_{k}^2b_s^{2(k-1)}} \sum_{i =1}^{b_s^{k-1}}\left(\sum_{m=1}^{b_s^{k-1}} \abs{i-m}  \right)^2.
\end{split}
\end{equation}
Using elementary closed-form expressions, the sum in the right hand side can be evaluated to be
\begin{equation}
\sum_{i =1}^{b_s^{k-1}}\left(\sum_{m=1}^{b_s^{k-1}} \abs{i-m}  \right)^2 = \frac{b_s^{k-1}}{60} \left( 7b_s^{4(k-1)} - 15b_s^{2(k-1)} + 8\right),
\end{equation}
so inserting this into Eq.~\eqref{eq:almost_done_mot} and using the fact that $n = b_s^L$, we obtain
\begin{equation}
\begin{split}
\|u_{k,j} \! - \text{ext}_{k-1}\!\left(\text{ave}_{k-1}(u_{k,j})\right)\|^2_F
\leq \!\frac{(b-a)^2C_{kj}^2}{n\omega_k^2} \frac{1}{60}\left( 7b_s^{2(k-1)} \! - 15 + 8b_s^{-2(k-1)}\!\right),
\end{split}
\end{equation}
which concludes the proof.
\end{proof}

In the motivating example, we can take $C_{kj} = 1$, $\omega_1 = \frac{1}{4}$, $\omega_2 = \frac{1}{2}$ and $\omega_3 = 1$ to conclude
\begin{equation}
\|T- \widetilde{T}\| \leq \sum_{k=2}^r \delta_k \|T_k\| \approx \frac{9.65 \pi }{n} \|\bigotimes_{k=1}^d u_k\|,
\end{equation}
which shows that the multiresolution format can achieve far lower approximation error for a given storage cost, provided $n$ is large enough. Our computational examples in Section~\ref{sec:applications} demonstrate that gains in storage can be achieved also for moderate values of $n$.

However, this example also shows that the error when representing a tensor in the multiresolution format has an inherent lower bound from using a coarser grid. To achieve machine precision for a general tensor, we would therefore in general expect to need to use the rank vector $(0, \ldots , 0, r_L)$ with $r_L$ large enough. However, for lower accuracy, the example above shows that it is possible to obtain good storage gains.

\section{Closedness and stability}\label{sec:closedness}
When attempting to find an optimal approximation of a tensor $T$ in $\text{MS}_{\mathcal{F}_{\mathbf{r}}}$ for a fixed rank vector $\mathbf{r}$, it is important to know whether or not the set $\text{MS}_{\mathcal{F}_{\mathbf{r}}}$ is closed. If not, then a tensor $T \in \overline{\text{MS}_{\mathcal{F}_{\mathbf{r}}}} \setminus \text{MS}_{\mathcal{F}_{\mathbf{r}}}$ by definition has a corresponding sequence of tensors $T^{(n)}$ in $ \text{MS}_{\mathcal{F}_{\mathbf{r}}}$, converging to $T$. $T$ therefore does not have an optimal approximation in the set $\text{MS}_{\mathcal{F}_{\mathbf{r}}}$ so the problem is ill-posed. In the by now classical setting of the (non-multiresolution) canonical format, this is associated with an instability in that successive approximations $T^{(n)}$ to $T$ have terms with diverging norm and convergence to $T$ is achieved through unstable cancellation effects \cite{de2008tensor}. We now show that the same holds true for the multiresolution format, even when using a closed format on each scale.

The base example is the multiresolution low-rank matrix format for $d=2$ with $b_s = 2$ and rank vector $(1,1)$. The matrix
\begin{equation}
T^{(n)} = \begin{bmatrix}
n & n \\
n & n
\end{bmatrix} - 
\begin{bmatrix}
\sqrt{n+1} \\
\sqrt{n-1}
\end{bmatrix}
\begin{bmatrix}
\sqrt{n+1} & 
\sqrt{n-1}
\end{bmatrix}
\end{equation}
is contained in the multiresolution format with rank vector $(1,1)$ for any $n$, and $T^{(n)} \rightarrow T := \bigl[ \begin{smallmatrix} -1 & 0 \\ 0 & 1\end{smallmatrix}\bigr]$, since $n - \sqrt{n+1}\sqrt{n-1} = \frac{n^2 - (n^2-1)}{n + \sqrt{n^2-1}} \rightarrow 0$ as $n\rightarrow \infty$. Since $T + a \bigl[ \begin{smallmatrix} 1 & 1 \\ 1 & 1\end{smallmatrix}\bigr] = \bigl[ \begin{smallmatrix} a-1 & a \\ a & a+1\end{smallmatrix}\bigr]$ can be seen to have rank $2$ for any real number $a$ by row reduction, it follows that $T$ is not in the multiresolution format with rank vector $(1,1)$. In other words, the format is not closed. We next extend this example to general rank vectors, values of $b_s$ and higher dimension $d$. We will consider the tensor-train format as the base format. Even though the tensor-train format is closed, we will show that the resulting multiresolution format $MS_{\text{TT}_\mathbf{r}}$ is not closed, in general. Here, the multiresolution rank vector is $\mathbf{r} = (r_0, \ldots , r_L)$, where each $r_k$ is a vector of tensor-train ranks, i.e., $r_k = ((r_{k})_1, \ldots , (r_{k})_{d-1})$.
\begin{theorem}\label{thm:closed}
~\begin{enumerate}
\item For $d=2$, the format $MS_{\text{TT}_\mathbf{r}}$ is closed if and only if $\mathbf{r}$ is of either the form $(r_0,\ldots, r_{k-1}, b_s^{k}, 0, \ldots , 0)$ or $(0, 0, \ldots, 0, r_k, 0, \ldots , 0)$.
\item For $d\geq 3$, the format $MS_{TT_\mathbf{r}}$ is not closed if $\mathbf{r} = (r_0,\ldots, r_{k-1}, r_k, 0, \ldots , 0)$, where the first tensor rank of $r_k$, $(r_k)_1$, is strictly less than $b_s^k$ and not all $(r_i)_1$ are zero, for $i < k$.
\end{enumerate}
\end{theorem}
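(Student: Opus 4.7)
The plan is to handle the two parts of the theorem separately. The ``if'' direction of Part 1 reduces to recognizing each format as either a homeomorphic image of a closed variety or a linear subspace, while the ``only if'' direction and Part 2 are obtained by explicitly constructing limiting sequences that generalize the base matrix example from the paragraph preceding the theorem.

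For the ``if'' direction of Part 1, I distinguish two cases. When $\mathbf{r} = (0, \ldots, 0, r_k, 0, \ldots, 0)$, the format equals $\text{ext}_{L-k}(\mathcal{F}_{r_k})$. Since $\mathcal{F}_{r_k}$ is closed (matrix rank is upper semi-continuous) and $\text{ext}_{L-k}$ is a continuous linear map with continuous left inverse $\text{ave}_{L-k}$, it is a closed embedding onto its image, so the image is closed. When $\mathbf{r} = (r_0, \ldots, r_{k-1}, b_s^{k}, 0, \ldots, 0)$, the relation $\text{ext}_{L-j}(T_j) = \text{ext}_{L-k}(\text{ext}_{k-j}(T_j))$ from~\eqref{eq:extprop} allows me to absorb every coarser summand into a single $\text{ext}_{L-k}$ term; since $\mathcal{F}_{b_s^k} = \mathbb{R}^{b_s^k \times b_s^k}$ is unrestricted, this shows the format coincides with the linear subspace of matrices constant on $b_s^{L-k} \times b_s^{L-k}$ blocks, which is closed.

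For the ``only if'' direction of Part 1, suppose $\mathbf{r}$ has last nonzero entry $r_k < b_s^k$ and some earlier $r_j \geq 1$. I plan to mimic the base matrix example by inducing a cancellation between scales $j$ and $k$: at level $j$ take $T_j^{(n)} = n\,\mathbb{1}_{b_s^j}\mathbb{1}_{b_s^j}^T$, which is rank one (hence in $\mathcal{F}_{r_j}$) and extends to $n\,\mathbb{1}\mathbb{1}^T$ on the finest grid, and at level $k$ take $T_k^{(n)} \in \mathcal{F}_{r_k}$ built from one diverging rank-one term $u^{(n)}(v^{(n)})^T$ with $u^{(n)}, v^{(n)} = \sqrt{n}\,\mathbb{1} + O(1/\sqrt{n})$ plus $r_k - 1$ bounded rank-one terms $u_m v_m^T$. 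After the $n\,\mathbb{1}\mathbb{1}^T$ contributions cancel, a Taylor expansion shows the limit equals $\text{ext}_{L-k}(S)$ with $S = -\mathbb{1}\beta^T - \alpha\mathbb{1}^T - \sum_{m=2}^{r_k} u_m v_m^T$, whose column space has dimension $r_k + 1 \leq b_s^k$ for generic choices of $\alpha$ and the $u_m$. A column-space count then shows $S$ cannot be written as $\text{ext}_{k-j}(S_j) + S_k$ with $S_j \in \mathcal{F}_{r_j}$ and $S_k \in \mathcal{F}_{r_k}$, provided $\alpha$ and the $u_m$ are chosen to lie outside the block-constant subspace $\text{ext}_{k-j}(\mathbb{R}^{b_s^j})$. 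The hard part will be making this obstruction robust to contributions from all intermediate scales and every choice of low-rank decomposition at those scales; I expect this to require a careful genericity argument for the perturbation vectors $\alpha, \beta, u_m, v_m$.

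For Part 2 ($d \geq 3$), I reduce to the matrix case by tensor product with all-ones vectors in the remaining modes. Let $M^{(n)} \to M$ be the matrix counterexample from Part 1 associated to matrix ranks $((r_0)_1, \ldots, (r_k)_1)$, which satisfies the Part 1 non-closedness hypothesis by assumption of Part 2. Define $T^{(n)} = M^{(n)} \otimes \mathbb{1}_{b_s^L}^{\otimes(d-2)}$, with multiresolution summands $T_m^{(n)} = M_m^{(n)} \otimes \mathbb{1}_{b_s^m}^{\otimes(d-2)}$; each $T_m^{(n)}$ has TT-rank $(\text{rank}(M_m^{(n)}), 1, \ldots, 1)$, whose first component is bounded by $(r_m)_1$ by Part 1 and whose trailing components fit in $r_m$ (the latter components of $r_m$ can be assumed at least one, without loss of generality). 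The limit $T = M \otimes \mathbb{1}^{\otimes(d-2)}$ has first-mode unfolding equal to $M \otimes (\mathbb{1}^{\otimes(d-2)})^T$, whose rank equals $\text{rank}(M)$; any multiresolution decomposition of $T$ therefore induces a matrix multiresolution decomposition of $M$ with the inherited rank bounds $(r_m)_1$, contradicting Part 1. The hard part is to transfer the column-space obstruction from $M$ cleanly to the first unfolding of $T$ while simultaneously honoring every TT-rank constraint.
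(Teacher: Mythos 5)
Your ``if'' direction of Part~1 is correct and usefully fills in what the paper leaves implicit, and your Part~2 plan (tensoring the matrix counterexample with a fixed rank-one factor in modes $3, \dots, d$) matches the paper's structure. The genuine gap is in the ``only if'' direction of Part~1, at exactly the step you flag as hard. The missing idea, which makes the obstruction uniform rather than generic, is that any multiresolution contribution from scales strictly below $k$, i.e., any sum $\sum_{m<k}\text{ext}_{k-m}(T_m)$ for any choices of $T_m$ of any admissible rank, is automatically constant on every $b_s \times b_s$ batch-block. This collapses all intermediate-scale contributions to a single linear subspace $\mathcal{S}$ of batch-block-constant matrices, so non-membership of the limit $T$ reduces to the single claim that $\text{rank}(T + S) \geq r_k + 1$ for all $S \in \mathcal{S}$. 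To verify this deterministically, the paper does not take a generic limit: it augments the base $2 \times 2$ cancellation by a carefully structured diagonal correction built from short ``anti-constant'' vectors $u$ and $w$, chosen so that a block-by-block row-echelon reduction and pivot count establish the rank bound for all $S \in \mathcal{S}$ at once. Your generic $\alpha$, $\beta$, $u_m$, $v_m$ and the proposed column-space count would need to first establish the block-constancy reduction and then hold uniformly over the whole subspace $\mathcal{S}$; absent that reduction, intermediate scales really can conspire against generic choices, so your plan as written does not close.

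For Part~2, your choice of the all-ones factor in place of the paper's $e_1^{\otimes(d-2)}$ is a sound and arguably cleaner alternative, since $\text{ext}_\ell$ applied to an all-ones vector is again all-ones, so the scale-$i$ and scale-$k$ pieces of $M^{(n)}$ remain on their respective scales after tensoring. However, the first-mode unfolding you propose for transferring the obstruction to the limit is the wrong tool: the first-mode unfolding of $\text{ext}_{L-m}(S_m)$ scrambles modes and does not directly yield a matrix multiresolution decomposition of $M$. The clean mechanism, which the paper uses, is contraction. If $T = \sum_m \text{ext}_{L-m}(S_m)$, contract every term with the fixed vector in modes $3, \dots, d$: contraction commutes with $\text{ext}_{L-m}$ up to a positive scalar (it passes to $\text{ave}_{L-m}$ of the contraction vector), each $S_m \times_3 \cdots \times_d$ is a matrix of rank at most the first TT-rank $(r_m)_1$, and the result is exactly a matrix multiresolution decomposition of $M$ with rank vector $((r_0)_1, \ldots, (r_k)_1, 0, \ldots, 0)$, contradicting Part~1. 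This replaces the ``hard part'' of your Part~2 sketch with a one-line computation.
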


The proof of Thm.~\ref{thm:closed} is detailed in Appendix~\ref{appendix:proof_closed}. Thm.~\ref{thm:closed} shows that there is no reason to expect $MS_{\mathcal{F}_{\mathbf{r}}}$ to be closed, even when the underlying format $\mathcal{F}$ is closed. However, we now prove a stability property that is only achieved when using a stable tensor format on each grid-scale. The following definition is similar to one made for the non-multiresolution canonical decomposition \cite[Def.~9.15]{hackbusch2012tensor}.
\begin{definition}
A sequence of tensors $T^{(n)}$ in $MS_{\mathcal{F}_{\mathbf{r}}}$ with 
\begin{equation}
T^{(n)} = \sum_{k=0}^{L} \text{ext}_{L-k}(T_k^{(n)})
\end{equation}
is called stable if there is a constant $C < \infty$ such that $\|T_k^{(n)}\| \leq C \|T^{(n)}\|$ for each $k = 0, \ldots, L$ and $n$.
\end{definition}
\begin{theorem}
The format $\mathcal{F}$ is closed if and only if, for all possible rank vectors $\mathbf{r}$, all stable, convergent sequences in $MS_{\mathcal{F}_{\mathbf{r}}}$ converge to a tensor in $MS_{\mathcal{F}_{\mathbf{r}}}$.
\end{theorem}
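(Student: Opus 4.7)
My plan is to prove each implication separately, exploiting finite-dimensionality of the ambient tensor spaces throughout.

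$(\Rightarrow)$ Assume $\mathcal{F}$ is closed. Fix a rank vector $\mathbf{r}$ and suppose $T^{(n)} = \sum_{k=0}^L \text{ext}_{L-k}(T_k^{(n)}) \to T$ is a stable sequence with constant $C$. Since $T^{(n)}$ converges, the norms $\|T^{(n)}\|$ are uniformly bounded, and the stability bound $\|T_k^{(n)}\| \leq C\|T^{(n)}\|$ then yields a uniform bound on each $\|T_k^{(n)}\|$. Each $T_k^{(n)}$ lies in the finite-dimensional space $\mathbb{R}^{b_s^k \times \ldots \times b_s^k}$, so a diagonal Bolzano--Weierstrass extraction produces a subsequence along which $T_k^{(n_j)} \to T_k^\star$ simultaneously for every $k$. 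Closedness of $\mathcal{F}_{r_k}$ forces $T_k^\star \in \mathcal{F}_{r_k}$, and the linearity (hence continuity) of $\text{ext}_{L-k}$ allows us to pass the limit inside the finite sum, giving $T = \sum_{k=0}^L \text{ext}_{L-k}(T_k^\star) \in MS_{\mathcal{F}_\mathbf{r}}$.

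$(\Leftarrow)$ For the converse, I use a trivial embedding of $\mathcal{F}_r$ in the multiresolution format. Fix a tensor space $\mathbb{R}^{b_s^L \times \ldots \times b_s^L}$ and a rank $r$, and pick any convergent sequence $S^{(n)} \to S$ with $S^{(n)} \in \mathcal{F}_r$. Choose the rank vector $\mathbf{r} = (0, \ldots, 0, r)$, supported only at the finest level. Under the standard convention that rank $0$ corresponds to the zero tensor, the defining sum for an element of $MS_{\mathcal{F}_\mathbf{r}}$ collapses to $T = \text{ext}_0(T_L) = T_L \in \mathcal{F}_r$, so $MS_{\mathcal{F}_\mathbf{r}} = \mathcal{F}_r$. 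The multiresolution representations $(0, \ldots, 0, S^{(n)})$ are trivially stable with constant $C = 1$, so the hypothesis gives $S \in MS_{\mathcal{F}_\mathbf{r}} = \mathcal{F}_r$ as required.

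Neither direction poses a serious technical obstacle. The substantive observation is that the stability condition is precisely what converts convergence of the sum $\sum_k \text{ext}_{L-k}(T_k^{(n)})$ into uniform boundedness of each summand, which in finite dimensions suffices to extract limits; without stability one could imagine cancellation phenomena as in the matrix example preceding Theorem~\ref{thm:closed}. The only point requiring care in the converse is the reduction argument: verifying that a rank vector vanishing except at the finest level collapses the multiresolution format to $\mathcal{F}_r$ itself, which follows directly from the convention that rank $0$ encodes the zero tensor.
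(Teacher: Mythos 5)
Your proof is correct and takes essentially the same approach as the paper's: the forward direction uses boundedness from stability plus a finite iterated Bolzano--Weierstrass extraction and closedness of $\mathcal{F}_{r_k}$, and the reverse direction specializes to $\mathbf{r}=(0,\ldots,0,r)$ to collapse $MS_{\mathcal{F}_\mathbf{r}}$ to $\mathcal{F}_r$ with stability constant $1$. (The paper's proof actually labels the $\mathbf{r}=(0,\ldots,0,r)$ reduction as the ``only if'' part, which is non-standard; your labeling of the implications is the conventional one.)
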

\begin{proof}
By taking $\mathbf{r} = (0, 0, \ldots, r)$, the ``only if'' part follows. For the converse, let $T^{(n)}$ be any sequence of stable tensors in $MS_{\mathcal{F}_{\mathbf{r}}}$ converging to some tensor $T$. We need to show that also $T$ is in $MS_{\mathcal{F}_{\mathbf{r}}}$. We proceed by showing that there exists a subsequence $T^{(n_j)}$ of the $T^{(n)}$ for which scale-wise convergence $T^{(n_j)}_k \rightarrow T_k$ holds, for some tensors $T_k$ in $\mathcal{F}_{r_k}$ and all $k=0, \ldots , L$. We then show that $T = \sum_{k=0}^L \text{ext}_{L-k}(T_k)$, which means that $T$ is indeed in $MS_{\mathcal{F}_{\mathbf{r}}}$.

For $n$ large enough, it follows that $\|T^{(n)}\| \leq \|T\| + 1$, so $\|T^{(n)}_k\| \leq C\|T\| + C$. For each fixed $k$, the sequence $\{T^{(n)}_k\}_{n=1}^\infty$ is then bounded, so has a convergent subsequence, by the Bolzano-Weierstrass theorem. By passing to subsequences of this subsequence, for each $k$ in turn, it follows that there is a subsequence such that $T^{(n_j)}_k \rightarrow T_k$ for each $k$, for some $T_k$ in $\mathcal{F}_{r_k}$, by closedness of $\mathcal{F}$. It then holds that $T^{(n_j)} = \sum_{k=0}^L \text{ext}_{L-k}(T_k^{(n_j)}) \rightarrow \sum_{k=0}^L \text{ext}_{L-k}(T_k)$. Since by assumption $T^{(n_j)}\rightarrow T$, we must have $T = \sum_{k=0}^L \text{ext}_{L-k}(T_k)$, so $T$ is in $MS_{\mathcal{F}_{\mathbf{r}}}$.
\end{proof}

For this reason, and for reasons to do with the decomposition algorithm presented in the next section, we will mostly restrict to closed tensor formats $\mathcal{F}$ in practice. The tensor-train format is one good candidate for this purpose.

\section{Alternating decomposition algorithm}\label{sec:alg}
This section describes a simple algorithm for computing an approximation of a tensor $T$ in $MS_{\mathcal{F}_{\mathbf{r}}}$. Because of Thm.~\ref{thm:closed}, this approximation problem is ill-posed even when $\mathcal{F}$ is a closed tensor-format. It will therefore not be possible to compute an optimal approximation of $T$ in $MS_{\mathcal{F}_{\mathbf{r}}}$, since it might not even exist. We therefore describe an alternating algorithm, which improves the approximation in every iteration. The tensor $T$ can be given either in full format, or as an already compressed tensor in $\mathcal{F}$. The steps in the algorithm carry through for any weakly closed tensor format $\mathcal{F}$, and the tensor-train format is a good example. The following Lemma will be important for the approximation algorithm.
\begin{lemma}\label{lemma:updownTT}
~\begin{enumerate}
\item If $T$ in $\mathbb{R}^{b_s^m \times \ldots \times b_s^m}$ has canonical decomposition $T = \sum_{k=1}^r \bigotimes_{j=1}^d u_{kj}$ with each $u_{kj} \in \mathbb{R}^{b_s^m}$, then
\begin{equation}
\begin{dcases}
\text{ext}_{\ell}(T) = \sum_{k=1}^r \bigotimes_{j=1}^d \text{ext}_\ell(u_{kj}), \\
\text{ave}_{\ell}(T) = \sum_{k=1}^r \bigotimes_{j=1}^d \text{ave}_\ell(u_{kj})
\end{dcases}
\end{equation} 

\item If $T$ has a tensor-train representation
\begin{equation}
T(i_1, \ldots , i_d) =  \sum_{\alpha_1 =1}^{r_1} \cdots  \!\!\! \sum_{\alpha_{d-1} = 1}^{r_{d-1}}G_1(i_1, \alpha_1)\cdot G_2(\alpha_1, i_2, \alpha_2) \cdot \ldots \cdot G_d(\alpha_{d-1}, i_d),
\end{equation}
with $G_k \in \mathbb{R}^{r_{k-1}\times b_s^k \times r_k}$, then $\text{ext}_\ell(T)$ has a tensor train decomposition with cores $\text{ext}_\ell(G_k(\cdot, i_k, \cdot)) \in \mathbb{R}^{r_{k-1}\times b_s^{k+\ell} \times r_k} $. Similarly, $\text{ave}_\ell(T)$ has a tensor train decomposition with cores $\text{ave}_\ell(G_k(\cdot, i_k, \cdot)) \in \mathbb{R}^{r_{k-1}\times b_s^{k-\ell} \times r_k}$.
\end{enumerate}
\end{lemma}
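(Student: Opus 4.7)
The plan is to verify both parts by direct substitution, exploiting that $\text{ext}_\ell$ and $\text{ave}_\ell$ act coordinate-wise on the index tuple and therefore interact cleanly with tensor products and with the TT bond sums.

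For part (1), I would first note that $\text{ext}_\ell$ is linear in its argument, so by distributing over the sum it suffices to check a single rank-one term $\bigotimes_{j=1}^d u_{kj}$. Expanding the definition and pushing the floors inside the product gives
\[
\text{ext}_\ell\!\left(\bigotimes_{j=1}^d u_{kj}\right)\!(i_1, \ldots , i_d) \;=\; \prod_{j=1}^d u_{kj}\!\left(\left\lfloor\tfrac{i_j-1}{b_s^\ell}\right\rfloor+1\right) \;=\; \prod_{j=1}^d \text{ext}_\ell(u_{kj})(i_j),
\]
which is the $(i_1, \ldots, i_d)$-entry of $\bigotimes_{j=1}^d \text{ext}_\ell(u_{kj})$. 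The analogous identity for $\text{ave}_\ell$ uses that a $d$-fold sum of a product factorizes as the product of one-dimensional sums, i.e. $\sum_{j_1, \ldots, j_d} \prod_{m=1}^d u_{km}(i_m + j_m) = \prod_{m=1}^d \sum_{j_m} u_{km}(i_m + j_m)$, together with the fact that the normalization $b_s^{-\ell d}$ splits as $\prod_{m=1}^d b_s^{-\ell}$, one factor per mode.

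For part (2), the key observation is that each TT core $G_k$ appears in the TT expansion only through its physical slot $i_k$, so both operations act on $G_k$ only along that mode. Substituting the definition of $\text{ext}_\ell$ into the TT expansion gives the same sum over bond indices $\alpha_1, \ldots, \alpha_{d-1}$ with each core now evaluated at the floored index, which by definition equals $\text{ext}_\ell(G_k(\cdot, i_k, \cdot))$ of shape $r_{k-1} \times b_s^{k+\ell} \times r_k$. For $\text{ave}_\ell$, the $d$-fold block sum commutes with the bond sums, and since each summand in the TT expansion is multiplicative in the physical indices, the block sum decouples into one per-mode sum per core. Distributing $b_s^{-\ell d}$ as $\prod_{k=1}^d b_s^{-\ell}$ across the $d$ cores then yields exactly $\text{ave}_\ell(G_k(\cdot, i_k, \cdot))$ as the new cores.

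The proof is essentially bookkeeping; the only point to be careful about is splitting the normalization $b_s^{-\ell d}$ evenly across the $d$ modes so that each per-core operation is exactly the one-dimensional $\text{ext}_\ell$ or $\text{ave}_\ell$ referred to in the statement. No results about closedness or optimal approximation are required.
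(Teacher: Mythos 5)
Your proof is correct and follows the same approach the paper has in mind: the paper's proof is simply ``Clear from definitions of $\text{ext}_\ell$ and $\text{ave}_\ell$,'' and you have supplied the routine verification it alludes to — entrywise substitution, multiplicativity of the CP and TT forms in the physical indices, factorization of the $d$-fold block sum, and the $b_s^{-\ell d} = \prod_{m=1}^d b_s^{-\ell}$ split of the normalization. No gaps.
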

\begin{proof}
Clear from definitions of $\text{ext}_\ell$ and $\text{ave}_\ell$.
\end{proof}
Note that the cost of computing $\text{ext}_1$ and $\text{ave}_1$ in the tensor-train format is $\mathcal{O}(dnr^2)$, if $r_k \leq r$ and $n_k \leq n$, for all $k = 1, \ldots , d$.

The alternating algorithm starts with an initial approximation $\sum_{k=0}^L \text{ext}_{L-k}(T_k^{(0)})$ to $T$. It proceeds by fixing all scales except for the $k$:th one, and improving the approximation on the $k$:th scale by the following update equation
\begin{equation}
T_k^{(n)} = \argmin_{S \in \mathcal{F}_{r_k}} \| T - \sum_{\ell < k} \text{ext}_{L-\ell}(T_\ell^{(n)}) - \sum_{\ell > k} \text{ext}_{L-\ell}(T_\ell^{(n-1)}) - \text{ext}_{L-k}(S)  \| .
\end{equation}
Sweeping over all indices $k = 0, \ldots , L$ in turn completes one step of the iteration, which is repeated subsequently. In order to compute the updates on each scale, we will use the following result.
\begin{lemma}\label{lemma:updateS}
\begin{equation}
\begin{split}
&\argmin_{S \in \mathcal{F}_{r_k}} \| T - \sum_{\ell < k} \text{ext}_{L-\ell}(T_\ell^{(n)}) - \sum_{\ell > k} \text{ext}_{L-\ell}(T_\ell^{(n-1)}) - \text{ext}_{L-k}(S)  \| \\
&= \argmin_{S \in \mathcal{F}_{r_k}} \| \text{ave}_{L-k}\left(T - \sum_{\ell < k} \text{ext}_{L-\ell}(T_\ell^{(n)}) - \sum_{\ell > k} \text{ext}_{L-\ell}(T_\ell^{(n-1)}) \right)- S  \| \\
&= \argmin_{S \in \mathcal{F}_{r_k}} \| \text{ave}_{L-k}( T )- \sum_{\ell < k} \text{ext}_{k-\ell }(T_\ell^{(n)})  - \sum_{\ell > k} \text{ave}_{\ell - k}(T_\ell^{(n-1)})- S  \|.
\end{split}
\end{equation}
\end{lemma}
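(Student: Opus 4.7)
The plan is to prove the two equalities separately. Let $A := T - \sum_{\ell < k} \text{ext}_{L-\ell}(T_\ell^{(n)}) - \sum_{\ell > k} \text{ext}_{L-\ell}(T_\ell^{(n-1)})$ denote the residual on the fine grid, so that the minimization problem on the top line is $\min_{S \in \mathcal{F}_{r_k}}\|A - \text{ext}_{L-k}(S)\|$. The key structural observation, which I would state and prove as a sublemma, is that any fine-grid tensor $A$ admits the orthogonal decomposition
\begin{equation}
A = \text{ext}_{L-k}(\text{ave}_{L-k}(A)) + \bigl( A - \text{ext}_{L-k}(\text{ave}_{L-k}(A))\bigr),
\end{equation}
where the two summands are orthogonal in the Frobenius inner product. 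This is because the first summand is constant on each block of side length $b_s^{L-k}$, while the second summand has vanishing mean on every such block by construction, so block-by-block the inner product reduces to (constant)$\cdot$(sum of deviations from the block mean) $= 0$.

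For the first equality, I would write
\begin{equation}
A - \text{ext}_{L-k}(S) = \bigl(A - \text{ext}_{L-k}(\text{ave}_{L-k}(A))\bigr) + \text{ext}_{L-k}\bigl(\text{ave}_{L-k}(A) - S\bigr),
\end{equation}
and apply the same orthogonality observation to conclude
\begin{equation}
\|A - \text{ext}_{L-k}(S)\|^2 = \|A - \text{ext}_{L-k}(\text{ave}_{L-k}(A))\|^2 + b_s^{(L-k)d}\,\|\text{ave}_{L-k}(A) - S\|^2,
\end{equation}
using that $\text{ext}_{L-k}$ replicates each entry $b_s^{(L-k)d}$ times so $\|\text{ext}_{L-k}(B)\|^2 = b_s^{(L-k)d}\|B\|^2$. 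The first term does not depend on $S$, and the second term is a strictly increasing function of $\|\text{ave}_{L-k}(A) - S\|$, so the $\argmin$ over $S\in \mathcal{F}_{r_k}$ is unchanged if we replace the objective by $\|\text{ave}_{L-k}(A)-S\|$. This yields the first equality.

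For the second equality, I would distribute $\text{ave}_{L-k}$ across the sum defining $A$ by linearity, and simplify each term using the identities \eqref{eq:extprop}. For $\ell < k$ one has $L-\ell \geq L-k$, so $\text{ave}_{L-k}(\text{ext}_{L-\ell}(T_\ell^{(n)})) = \text{ext}_{(L-\ell)-(L-k)}(T_\ell^{(n)}) = \text{ext}_{k-\ell}(T_\ell^{(n)})$. For $\ell > k$ one has $L-\ell \leq L-k$, and splitting $L-k = (\ell-k) + (L-\ell)$ gives $\text{ave}_{L-k}(\text{ext}_{L-\ell}(T_\ell^{(n-1)})) = \text{ave}_{\ell-k}(\text{ave}_{L-\ell}(\text{ext}_{L-\ell}(T_\ell^{(n-1)}))) = \text{ave}_{\ell-k}(T_\ell^{(n-1)})$. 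Substituting and using that $\text{ave}_{L-k}$ acts as the identity on $S$-independent constants collected on the outside gives the third expression.

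The only genuinely subtle step is the orthogonal decomposition underlying the first equality; everything else is a mechanical application of the extension/averaging identities. I do not expect any difficulty, but it is worth emphasizing in the write-up that the argument crucially uses the Frobenius norm, since orthogonality of the block-constant and block-mean-zero components is what makes the problem reduce cleanly from the fine grid to the coarse grid.
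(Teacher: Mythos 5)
Your proof is correct and rests on the same core identity as the paper's: that $\text{ext}_{L-k}$ and $b_s^{d(L-k)}\text{ave}_{L-k}$ are adjoints, equivalently that $\text{ext}_{L-k}\circ\text{ave}_{L-k}$ is the orthogonal projection onto block-constant tensors. The paper proves the first equality by directly expanding $\|A - \text{ext}_{L-k}(S)\|^2$ and absorbing the $S$-independent term, while you phrase the same reduction as a Pythagorean decomposition; the second equality is handled identically via Eq.~\eqref{eq:extprop} in both.
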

\begin{proof}
We show the first equality in the statement; the second follows from the first together with Eq.~\eqref{eq:extprop}. For any $S \in \mathbb{R}^{b_s^k \times \ldots \times b_s^k}$ and any fixed $A \in \mathbb{R}^{b_s^L \times \ldots \times b_s^L}$, we have
\begin{equation}\label{eq:minS}
\begin{split}
\| A - \text{ext}_{L-k}(S)  \|^2 &= \|A\|^2 - 2\langle A, \text{ext}_{L-k}(S) \rangle + \|\text{ext}_{L-k}(S)\|^2 \\
&= \|A\|^2 - 2b_s^{d(L-k)}\langle \text{ave}_{L-k}(A), S \rangle + b_s^{d(L-k)}\|S\|^2,
\end{split}
\end{equation}
so a minimizer of Eq.~\eqref{eq:minS} is also a minimizer $- 2\langle \text{ave}_{L-k}(A), S \rangle + \|S\|^2$. Since $A$ is fixed, $S$ is therefore also a minimizer of
\begin{equation}
\|\text{ave}_{L-k}(A)\|^2 - 2\langle \text{ave}_{L-k}(A), S \rangle + \|S\|^2 = \| \text{ave}_{L-k}(A) - S  \|^2,
\end{equation}
from which the statement follows when taking
\begin{equation}
A = T - \sum_{\ell < k} \text{ext}_{L-\ell}(T_\ell^{(n)}) - \sum_{\ell > k} \text{ext}_{L-\ell}(T_\ell^{(n-1)}).
\end{equation}
\end{proof}

Lemma~\ref{lemma:updateS} implies that the approximation $T^{(n)}_k$ on each scale can be updated by solving an optimal approximation problem in $\mathcal{F}_{r_k}$. Since $\mathcal{F}$ was assumed weakly closed, this problem is well-posed. In practice, it might be computationally easier to instead find a quasi-optimal approximation for this update. This is for instance the case when using the tensor-train approximation, where the standard TT-SVD algorithm \cite[Alg.~1]{oseledets2011tensor} indeed only guarantees a quasi-optimal result.

We can structure the update steps in a downward and upward sweep to prevent recalculating the tensors on the coarser grids. In the downward sweep, we calculate and store
\begin{equation}
\begin{split}
T_{\text{down}, L} &= T, \\
T_{\text{down}, k-1} &= \text{ave}_1(T_{\text{down},k}  - T_k^{(n-1)}), \text{ for }k = L, \ldots, 1. 
\end{split}
\end{equation}
In closed form, this results in
\begin{equation}
T_{\text{down}, k} = \text{ave}_{L-k}\left(T - \sum_{\ell > k} \text{ext}_{L-\ell}(T_\ell^{(n-1)})\right),
\end{equation}
so the update equation on the $0$:th level in Lemma~\ref{lemma:updateS} reads as 
\begin{equation}
T_0^{(n)} = \text{round}_{\mathcal{F}_{r_{0}}}(T_{\text{down},0}).
\end{equation}
Next, in the upward sweep, we set
\begin{equation}
\begin{split}
T_{\text{up}, 0} &= 0, \\
T_{\text{up},k} &= \text{ext}_1(T_{\text{up},k-1} + T_k^{(n)} ), \text{ for } k = 1, \ldots, L-1,
\end{split}
\end{equation}
where $T_k^{(n)}$ is calculated from $T_k^{(n-1)}$ and $T_{\text{up},k-1}$ by the following procedure. We have
\begin{equation}
T_{\text{down}, k} - T_{\text{up}, k-1} = \text{ave}_{L-k}( T )- \sum_{\ell < k} \text{ext}_{k-\ell }(T_\ell^{(n)})  - \sum_{\ell > k} \text{ave}_{\ell - k}(T_\ell^{(n-1)}),
\end{equation}
so the update equation in Lemma~\ref{lemma:updateS} reads as 
\begin{equation}
T_k^{(n)} = \text{round}_{\mathcal{F}_{r_k}}(T_{\text{down}, k} - T_{\text{up}, k-1}).
\end{equation}
This procedure is summarized in Alg.~\ref{alg:AL-multi}.

\begin{algorithm}
\caption{Alternating multiresolution decomposition}\label{alg:AL-multi}
\begin{algorithmic}[1]
 \Require{$d$-tensor $T$ in full format or $\mathcal{F}$, vector of rank bounds $(r_0, \ldots , r_L)$, maximum number of iterations $M$.}{}
 \Ensure{Approximation $\sum_{k=0}^L \text{ext}_k(T_k)$ to $T$.}{}
\For{$k =0:L$}\Comment{Initialization}
\State $T_k^{(0)} = 0$
\EndFor
\For{$n =1:M$}\Comment{Main loop}
\State $T_{\text{down}, L} = T$
\For{$k =L:-1:1$}\Comment{Downward sweep}
\State         $T_{\text{down}, k-1} = \text{ave}_1(T_{\text{down},k}  - T_k^{(n-1)})$
\EndFor     
\State $T_{\text{up}} = 0$
\For{$k =0:L-1$}\Comment{Upward sweep}
\State	$T_k^{(n)} = \text{round}_{\mathcal{F}_{r_k}}(T_{\text{down}, k} - T_{\text{up}})$
\State        $T_{\text{up}} = \text{ext}_1(T_{\text{up}} + T_k^{(n)} )$
\EndFor     
\State	$T_L^{(n)} = \text{round}_{\mathcal{F}_{r_L}}(T - T_{\text{up}})$
\EndFor
 \end{algorithmic}
\end{algorithm}

In the case when $T$ is already given in the tensor-train format, Lemma~\ref{lemma:updownTT} shows that $\text{ext}_1(T)$ and $\text{ave}_1(T)$ can be computed in the compressed format with cost $\mathcal{O}(r^2dn)$, where $r$ is the maximum of the TT-ranks of $T$. Since the tensors $T_{\text{down}, k} = \text{ave}_{L-k}(T - \sum_{\ell > k} \text{ext}_{L-\ell}(T_\ell^{(n)}))$ appearing in the downward sweep have TT-representation with rank at most $R = r + r_0 + \ldots + r_L$, the downward sweep therefore has cost bounded by
\begin{equation}
\mathcal{O}(R^2d(1 + b_s + \ldots + b_s^L)) = \mathcal{O}(R^2d\frac{ b_s^L - 1}{b_s - 1}) = \mathcal{O}(R^2db_s^L) = \mathcal{O}(R^2dn),
\end{equation}

In the upward sweep, a quasi-optimal minimizer of $\argmin_{S \in \mathcal{F}_{r_k}} \| T_{\text{down},k} - T_{\text{up},k}  - S\|$ can be computed by a call to the TT-rounding procedure. Since $T_{\text{down},k} - T_{\text{up},k}$ has TT-ranks at most $r+\sum_{\ell \neq k}r_k$, one iteration of Alg.~\ref{alg:AL-multi} is then of cost at most
\begin{equation}
\mathcal{O}(R^3d(1 + b_s + \ldots + b_s^L)) = \mathcal{O}(R^3d\frac{ b_s^L - 1}{b_s - 1}) = \mathcal{O}(R^3db_s^L) = \mathcal{O}(R^3dn).
\end{equation}
The total cost of Alg.~\ref{alg:AL-multi} is therefore $\mathcal{O}(MR^3dn)$.

In general, alternating algorithms of the form in Alg.~\ref{alg:AL-multi} lead to a monotonically decreasing objective function $\|T - \sum_{k=0}^L \text{ext}_{L-k}(T_k^{(n)})\|$. However, there are in general no guarantees that the approximations $T_k^{(n)}$ on each scale converge, and even if they do, convergence might occur to only a local minimum. This is a typical situation when dealing with tensors in dimension higher than two, and occurs for instance when using the popular alternating least-squares algorithm for computing a (non-multiresolution) canonical decomposition of a tensor \cite{uschmajew2012local,wang2014global}, and when using iterative methods for computing canonical decompositions with orthogonality constraints \cite{chen2009tensor,wang2015orthogonal}. Appendix~\ref{sec:local} states and proves a local convergence guarantee for Alg.~\ref{alg:AL-multi}.

\section{Tensor operations}\label{sec:operations}
The different scales of the multiresolution format introduce a grading on $MS_{\mathcal{F}_\mathbf{r}}$, and we now show that all common tensor operations can be performed in such a way that they respect the graded structure and can be computed with cost independent of the number of levels $L$. The format can therefore be used in calculations without having to convert into full format.

\subsection{Addition} If $T$ and $S$ have the multiresolution representations $(T_0, \ldots , T_L)$ and $(S_0, \ldots , S_L)$, respectively, then $S+T$ has multiresolution representation $(T_0+S_0, \ldots , T_L+S_L)$.

\subsection{Rounding} Let $T$ have multiresolution representation $(T_0, \ldots , T_L)$ with each $T_k \in \mathcal{F}_{r_k}$. When the $T_k$ potentially have suboptimal ranks, for instance as a result of having performed addition or taking Hadamard products, a multiresolution representation with more beneficial ranks is given by $(\widetilde{T}_0, \ldots , \widetilde{T}_L)$, with $\widetilde{T}_k = \text{round}_\mathcal{F}(T_k, b_s^{-d(L-k)}\varepsilon)$. This results in an approximation error
\begin{equation}
\|T - \sum_{k=0}^{L}\text{ext}_{L-k}(\widetilde{T}_k)\| \leq \sum_{k=0}^{L} \| \text{ext}_{L-k} (T_k - \widetilde{T}_k) \| \leq \varepsilon \sum_{k=0}^L \|T_k\|.
\end{equation}

The cost of this procedure is given by
\begin{equation}
\sum_{k=0}^L \mathcal{O}(r^3b_s^kd) =  \mathcal{O}(r^3d \frac{b_s^{L+1}-1}{b_s - 1})  = \mathcal{O}(r^3d b_s^{L}) =  \mathcal{O}(r^3d n).
\end{equation}

\subsection{Hadamard product} If $T$ and $S$ have the respective multiresolution representations $(T_0, \ldots , T_L)$ and $(S_0, \ldots , S_L)$, then $S\circ T$ has multiresolution representation $(R_0, \ldots , R_L)$ with
\begin{equation}
R_k = T_k\circ (\text{ext}_k(S_0) + \ldots + S_{k}) + S_k \circ (\text{ext}_k(T_0) + \ldots + \text{ext}_1(T_{k-1})).
\end{equation}
 $R_k$ can be computed recursively, with rounding during intermediate steps to avoid rank-growth, i.e., $A_0 = T_0, A_k = \text{round}_{\mathcal{F}}(T_k + \text{ext}_1(A_{k-1}))$ and $B_0 = S_0, B_k = \text{round}_{\mathcal{F}}(S_k + \text{ext}_1(B_{k-1}))$. This results in
\begin{equation}
R_k = T_k\circ A_k + S_k\circ B_k.
\end{equation}
In the case when $\mathcal{F}$ is the tensor-train format, $\text{ext}_1(T_{k-1})$ can be computed with cost $\mathcal{O}(\text{rank}_{TT}(A_{k-1})^2b_s^kd)$, by Lemma~\ref{lemma:updownTT}, and the rounding procedure has cost $\mathcal{O}(\text{rank}_{TT}(A_{k-1}+T_{k})^3b_s^kd)$. If we write
\begin{equation}
R = \max_{1\leq k \leq L} \text{rank}_{TT}(\text{ext}_k(T_0) + \text{ext}_{k-1}(T_1) + \ldots + T_k) \leq r_0 + \ldots + r_L,
\end{equation}
it follows that the total cost of computing the Hadamard product is
\begin{equation}
\sum_{k=0}^L \mathcal{O}(R^3b_s^kd) =  \mathcal{O}(R^3d \frac{b_s^{L+1}-1}{b_s - 1})  = \mathcal{O}(R^3d b_s^{L}) =  \mathcal{O}(R^3d n).
\end{equation}

\subsection{Tensor-vector contraction} If the tensor $T$ has a multiresolution representation $(T_0, \ldots , T_L)$ then $T\times_j v$ has multiresolution representation
\begin{equation}
(b_s^L T_0\times_j \text{ave}_L(v), b_s^{L-1}T_1\times_j \text{ave}_{L-1}(v), \ldots , T_L \times_j v).
\end{equation}
The cost of computing $T_k \times_j v$ when $T_k$ is given in the tensor-train format, is $\mathcal{O}(r_k^2dn)$. Recursively computing $\text{ave}_{k}(v) = \text{ave}(\text{ave}_{k-1}(v))$ has cost $\mathcal{O}(n)$. The total cost then becomes
\begin{equation}
\sum_{k=0}^L \mathcal{O}(r^2b_s^kd) =  \mathcal{O}(r^2d \frac{b_s^{L+1}-1}{b_s - 1})  = \mathcal{O}(r^2d b_s^{L}) =  \mathcal{O}(r^2d n).
\end{equation}

\subsection{Frobenius norm} The Frobenius norm can be computed as $(T\circ T)\times_1 v \ldots \times_d v$, where $v \in \mathbb{R}^n$ is a vector with all entries equal to $1$.

\section{Applications}\label{sec:applications}
This section compares the compression ratios achieved using Alg.~\ref{alg:AL-multi} to those of using the tensor-train decomposition, for a variety or real-world datasets. In $2$D, we show how this can be used to achieve greater accuracy than a truncated singular value decomposition for given storage. In dimensions higher than two, we achieve greater compression than the tensor-train decomposition.

All computations were carried out on a MacBook Pro with a 3.1 GHz Intel Core i5 processor and 16 GB of memory.

\subsection{Motivating example revisited}
We consider the tensor $T \in \mathbb{R}^{n\times n \times n}$ in Eq.~\eqref{eq:motivating_example} in the motivating example. In the canonical format, we compare the approximation error of a rank-$2$ approximation, obtained by the standard alternating least-squares algorithm \cite{TTB_Software,TTB_Dense,TTB_Sparse}, to the multiresolution canonical approximation produced by Alg.~\ref{alg:AL-multi}. We used rank vector $(0, \ldots , 0, 1, 1, 1)$ which then has lower storage cost than the rank-$2$ approximation, with a single iteration of Alg.~\ref{alg:AL-multi}. For the alternating least-squares algorithm on each scale, we used the HOSVD as initial guess. The result is shown in Fig.~\ref{fig:ex_motivating}. Consistent with Sec.~\ref{sec:motivating_example}, Alg.~\ref{alg:AL-multi} produces an approximation with relative error scaling as $\mathcal{O}(n^{-1})$. Alg.~\ref{alg:AL-multi} therefore results in far lower approximation error for the same compression ratio, provided $n$ is large enough.
\begin{figure}
\begin{center}
 \includegraphics[width=0.96\textwidth]{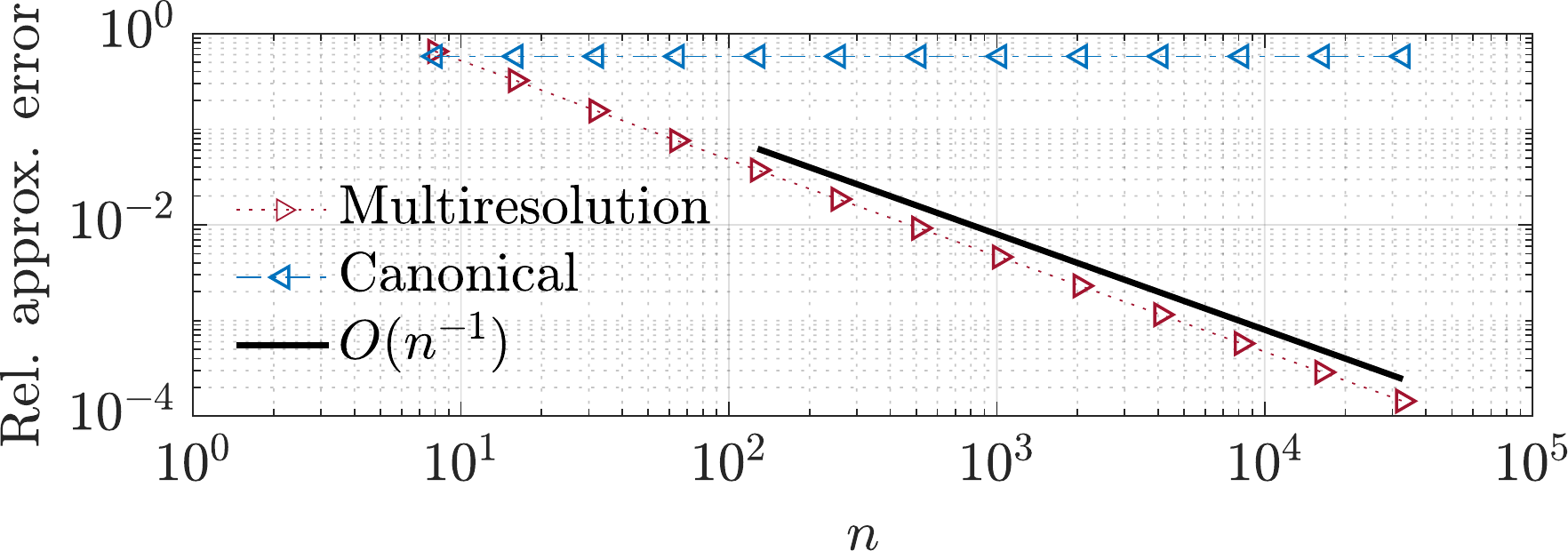} 
\caption{Compression of the tensor in Eq.~\eqref{eq:motivating_example} as a rank-$2$ approximation in the canonical format and in the multiresolution canonical format. The relative approximation error is shown as a function of the tensor dimension $n$.}
\label{fig:ex_motivating}
\end{center}
\end{figure}

\subsection{Image data}

We consider two matrices with multiscale features. The matrices are greyscale versions of images with features on several scales \cite{nycphoto,earthphoto}, rescaled to be of size $2048\times 2048$ pixels. The resulting matrices in $\mathbb{R}^{2048\times 2048}$ were compressed in both the low-rank matrix format and in the multiresolution format. We used batch-size $b_s = 2$ and rank vector $(r, \ldots , r)$ for increasing values of $r$ and maximum number of iterations $M$. The results are shown in Fig.~\ref{fig:ex_matrix_results}. For accuracies for which the low-rank matrix format achieves a compression ratio of at least two, the multiresolution format achieves up to a factor $1.5$ higher compression ratio.

\begin{figure}
\begin{tabular}{c}
  \includegraphics[width=0.96\textwidth]{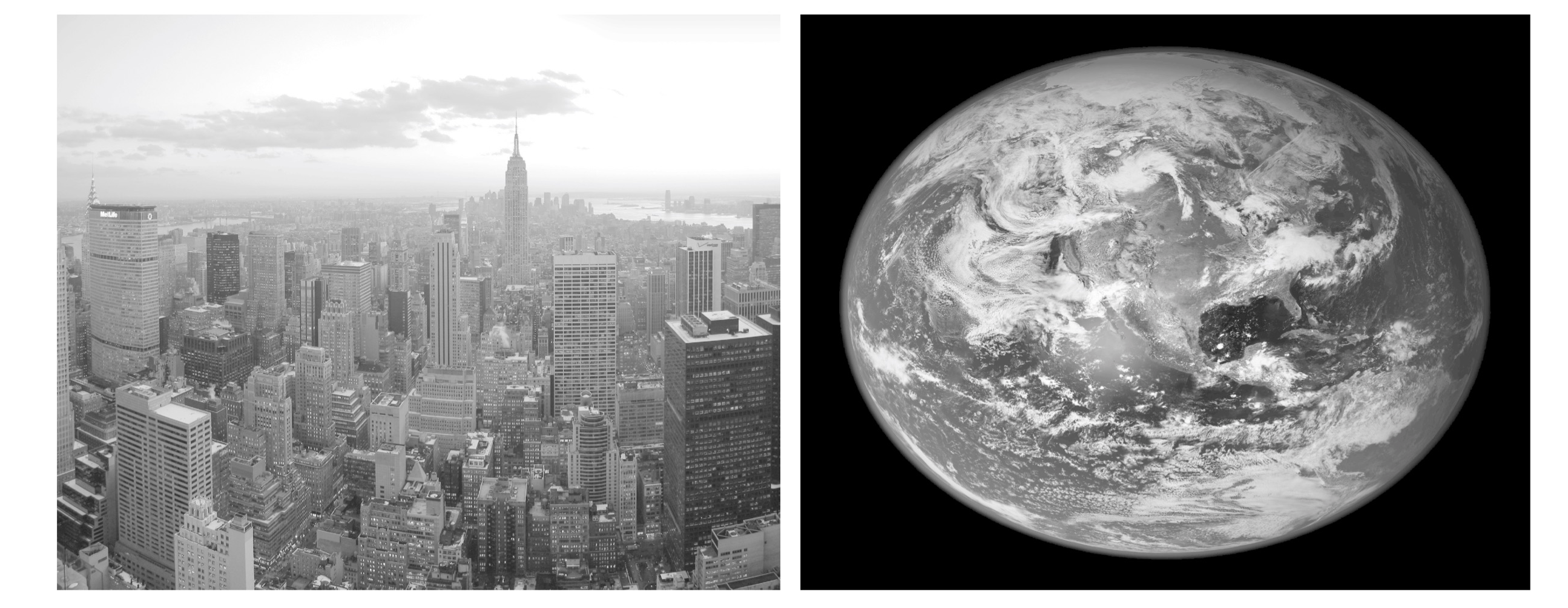}  \\
  \includegraphics[width=0.96\textwidth]{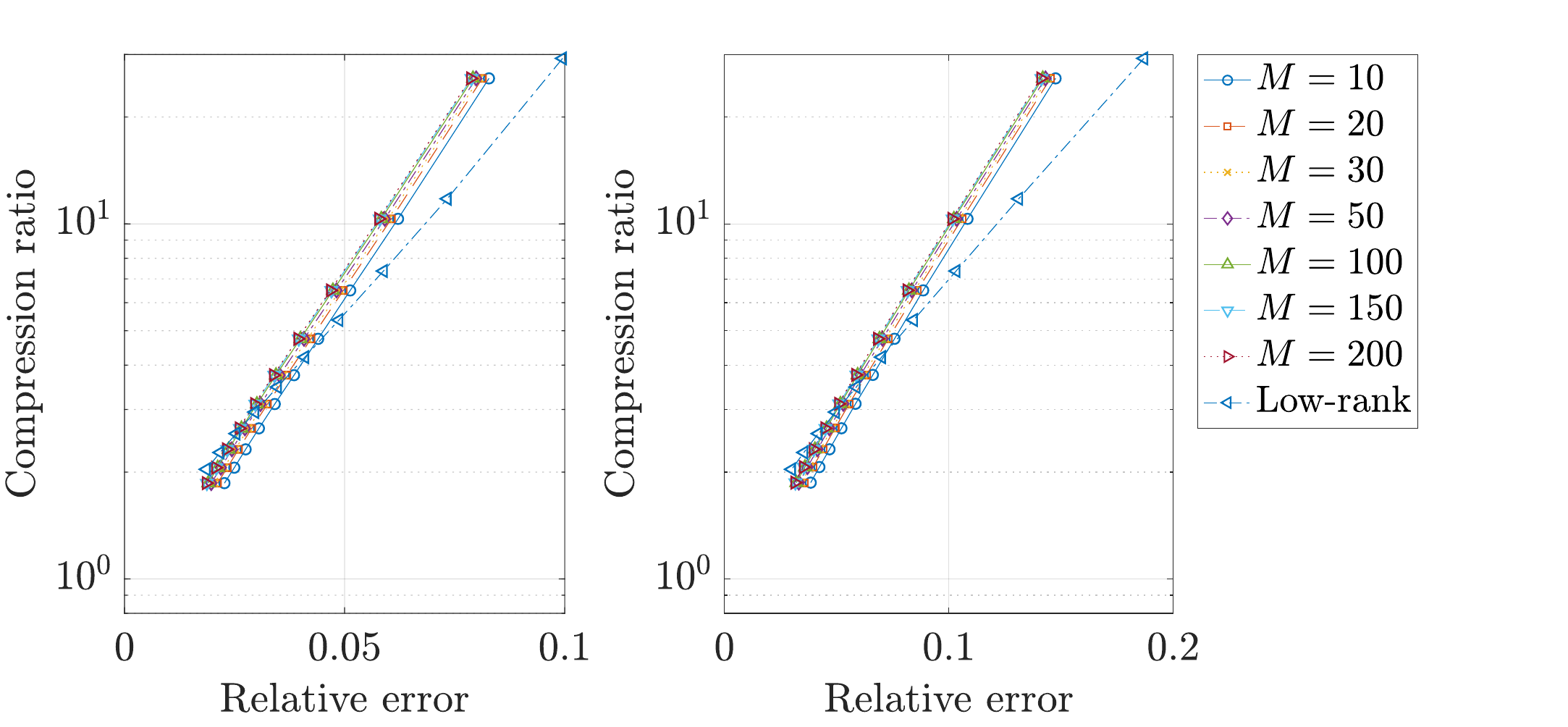}  \\
  \includegraphics[width=0.96\textwidth]{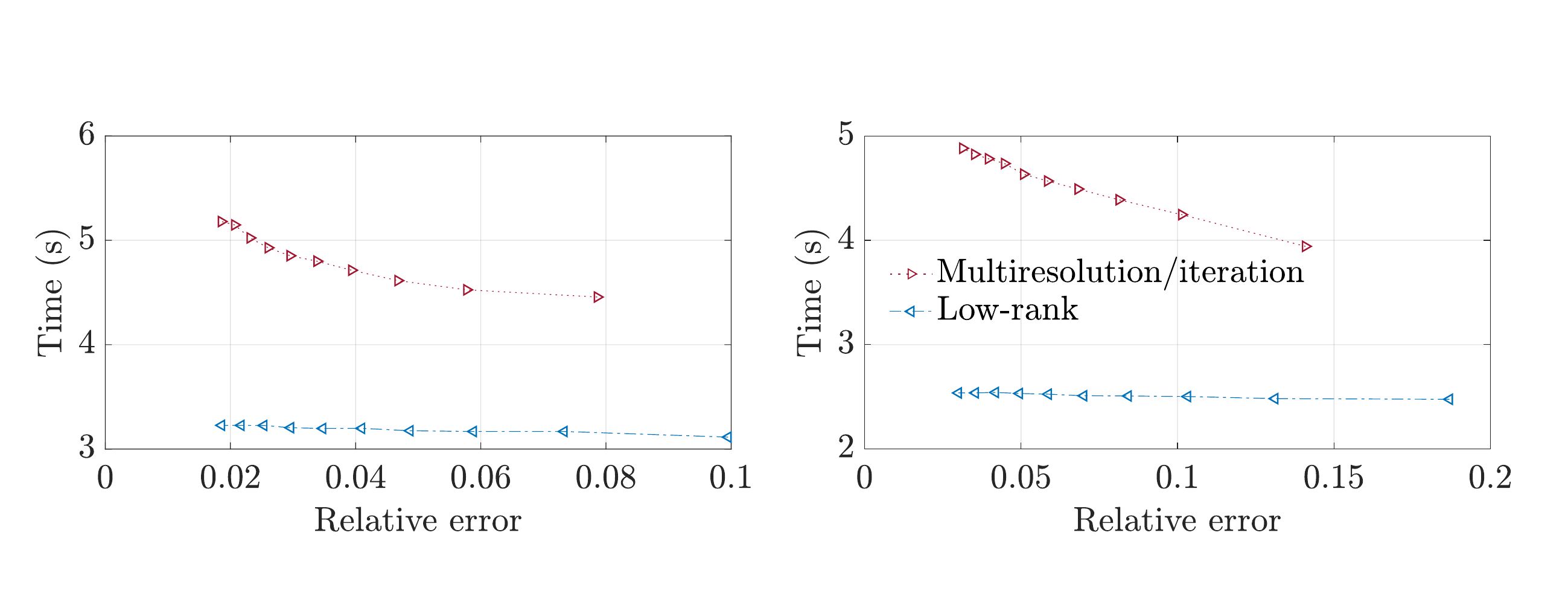}  \\
\end{tabular}
\caption{Top row: Images from \cite{nycphoto}, \cite{earthphoto}, respectively, Middle row: compression ratios of matrices in \cite{nycphoto,earthphoto} as functions of approximation error.  Bottom row: runtimes as functions of approximation error.} \label{fig:ex_matrix_results}
\end{figure}

Fig.~\ref{fig:ex_matrix_show} shows a side-by-side comparison of one of the images compressed in both the multiresolution low-rank matrix format, and the ordinary low-rank matrix format. For the same compression ratio, the multiresolution format has visibly significantly clearer features and correspondingly lower approximation error.

\begin{figure}
\begin{tabular}{c}
  \includegraphics[width=0.96\textwidth]{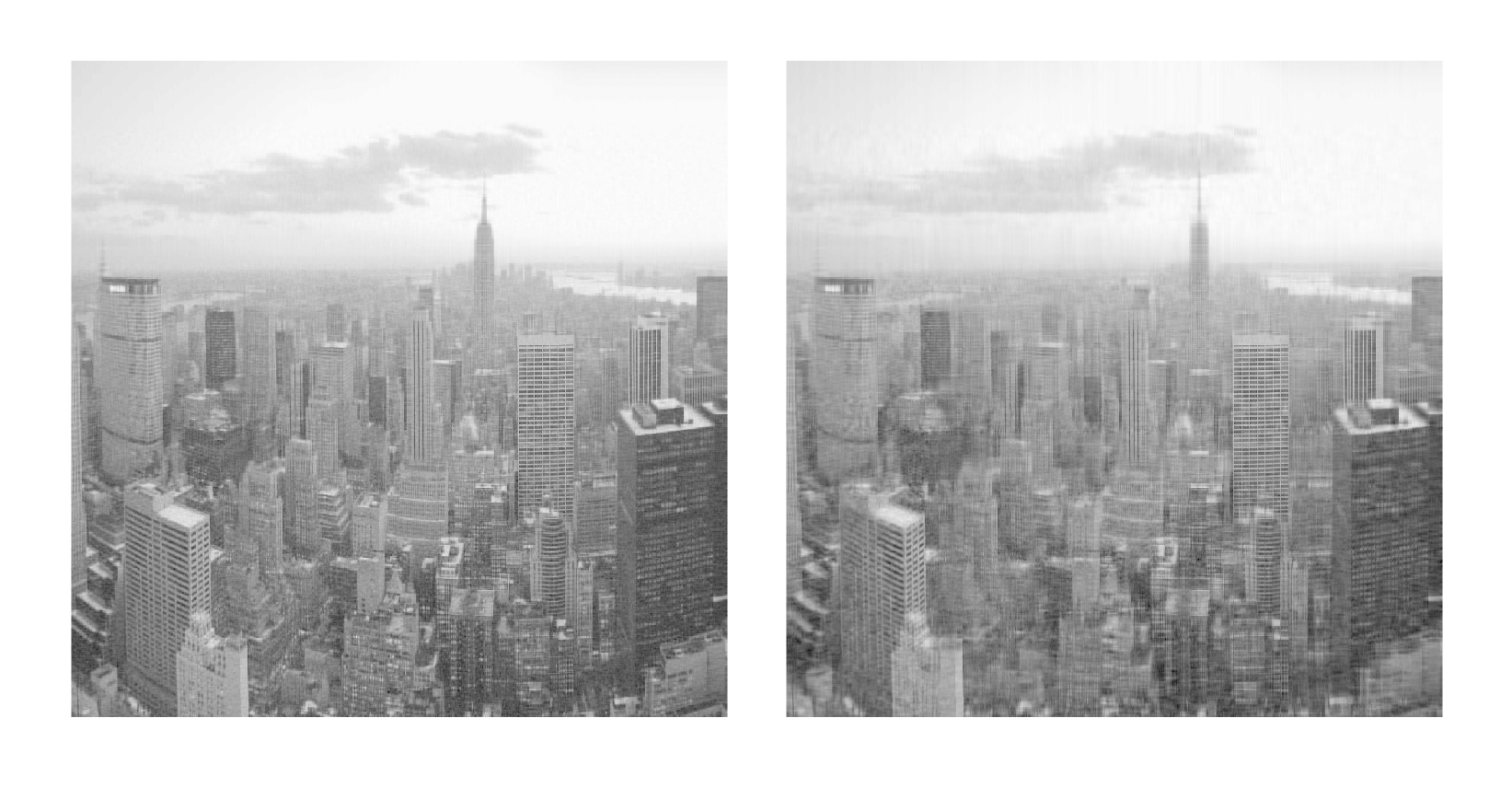}
\end{tabular}
\caption{Compressed images with equal compression ratio in multiresolution format (left) and low-rank matrix format (right).} \label{fig:ex_matrix_show}
\end{figure}

Figs.~\ref{fig:ex_matrix_first} and \ref{fig:ex_matrix_second} show the compressed version of the matrix $A$, decomposed into its different scales. The $i$:th subimage in Fig.~\ref{fig:ex_matrix_first} contains the sum of the $i$ highest scales of the compressed format, i.e., $\sum_{k=0}^{i-1} \text{ext}_{L-k} \left(A_k\right)$. The $i$:th subimage in Fig.~\ref{fig:ex_matrix_second} contains the sum of the $i$ lowest scales of the compressed format, i.e., the matrix $\sum_{k=L-i+1}^{L} \text{ext}_{L-k} \left(A_k\right)$.
\begin{figure}
\begin{tabular}{c}
\includegraphics[width=0.96\textwidth]{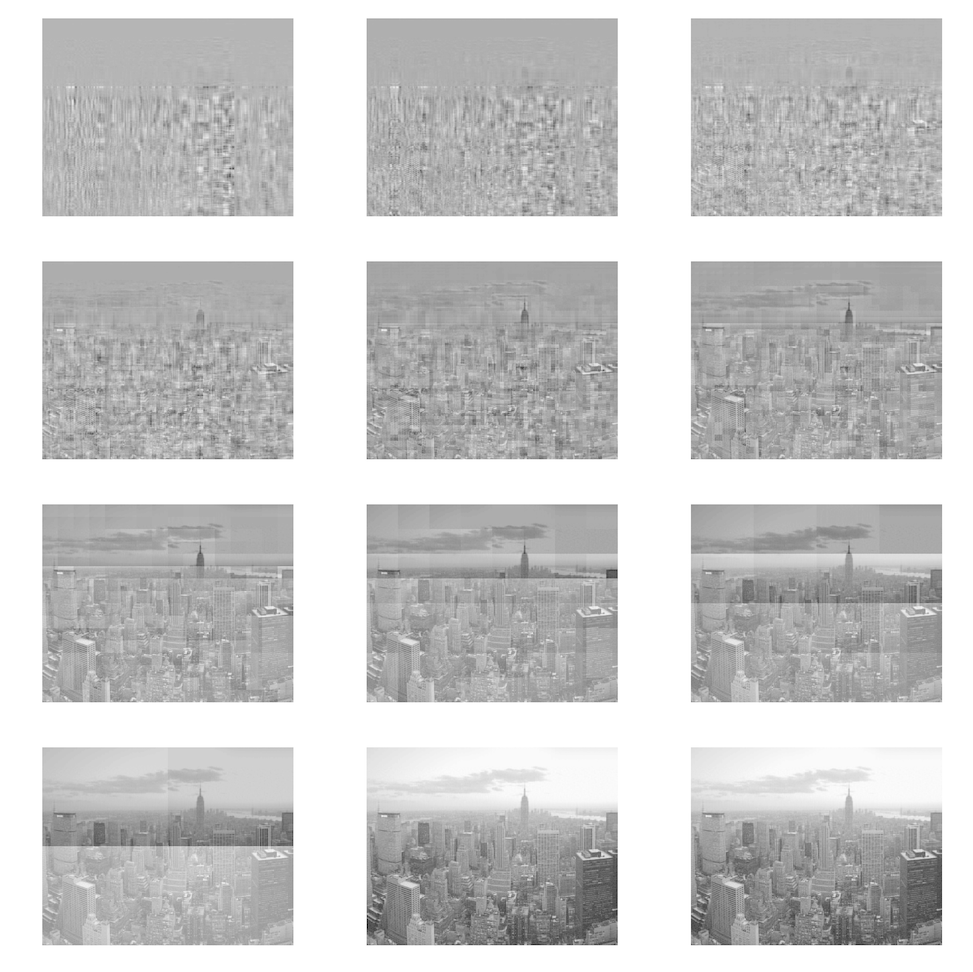}
\end{tabular}
\caption{The $i$th successive image shows the sum of the $i$ highest scales in the compressed multiresolution format.} \label{fig:ex_matrix_first}
\end{figure}

\begin{figure}
\begin{tabular}{c}
 \includegraphics[width=0.96\textwidth]{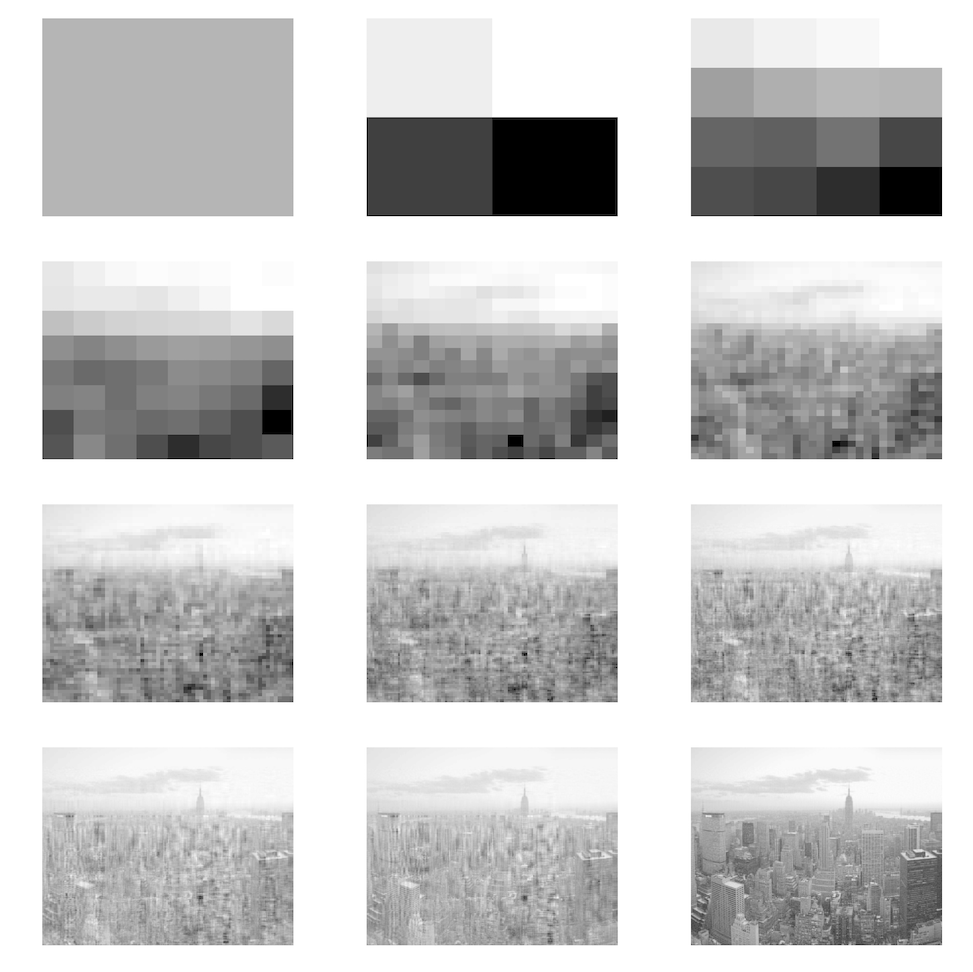}
\end{tabular}
\caption{The $i$th successive image shows the sum of the $i$ lowest scales in the compressed multiresolution format.} \label{fig:ex_matrix_second}
\end{figure}

\subsection{Hyperspectral wavelength data}
We consider hyperspectral wavelength data, which are aerial view photographs of different environments, captured at multiple wavelengths across the electromagnetic spectrum. The data is organized into a tensor $S \in \mathbb{R}^{n_1\times n_2 \times n_3}$ where $n_1$ and $n_2$ are the physical dimensions, and $n_3$ the number of recorded wavelengths. Each slice $T( \cdot, \cdot, i)$ therefore contains a photograph at the $i$:th recorded wavelength.

We consider three different hyperspectral images of different environments \cite{urbandataset,avirisdataset,samsondataset}. One slice of each environment is shown in Fig.~\ref{fig:ex_hyperspectral_results}. For the data from \cite{urbandataset}, slices $1-4$, $76$, $87$, $101-111$, $136-153$ and $198-210$ are removed because of contamination by atmospherical effects, and we consider the first $128$ slices of the upper left $256\times 256$ sub-image. For the data from \cite{avirisdataset}, we consider the $1920\times 640$-pixel subimage starting at the index $(1, 1)$, and for the data from \cite{samsondataset}, we consider the $896\times 896$-pixel subimage starting at index $(1, 1)$, each with $128$ slices. Since each image exhibits features on multiple scales, we expect the multiresolution format to achieve good compression.

This results in three tensors in $\mathbb{R}^{256 \times 256 \times 128},\mathbb{R}^{1920 \times 640 \times 128},\mathbb{R}^{896 \times 896 \times 128}$, respectively, which we compress using the multiresolution tensor-train format and the non-multiresolution tensor-train format. The results are shown in Fig.~\ref{fig:ex_hyperspectral_results}, and shows higher compression ratio in the multiresolution format across practically all accuracies where the tensor-train achieves a compression ratio of at least $1$. The compression ratio is several times larger in the multiresolution tensor format for a wide range of accuracies. The simulations used batch-size $b_s = 2$, rank-vector $(r, \ldots, r)$ for increasing $r$ and maximum number of iterations $M$. The runtime per iteration is a small factor times that of the tensor-train decomposition. To lower the total computational time, one can also consider using randomized algorithms \cite{sun2019low,che2019randomized,wang2015fast} for the tensor approximation on each scale.

\begin{figure}
\begin{tabular}{c}
 \includegraphics[width=0.96\textwidth]{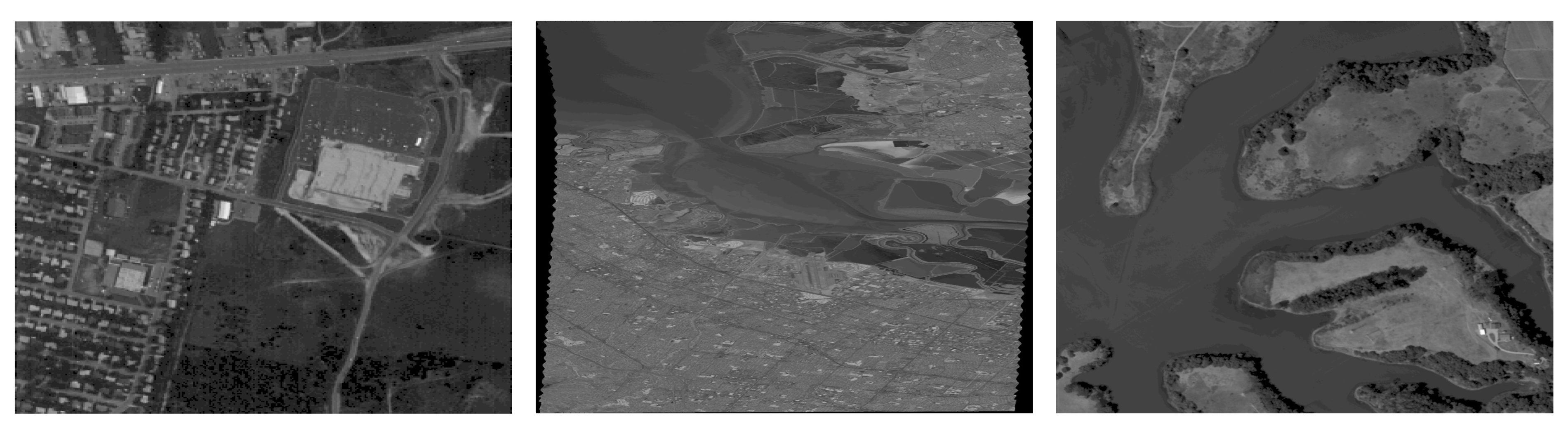} \\
  \includegraphics[width=0.96\textwidth]{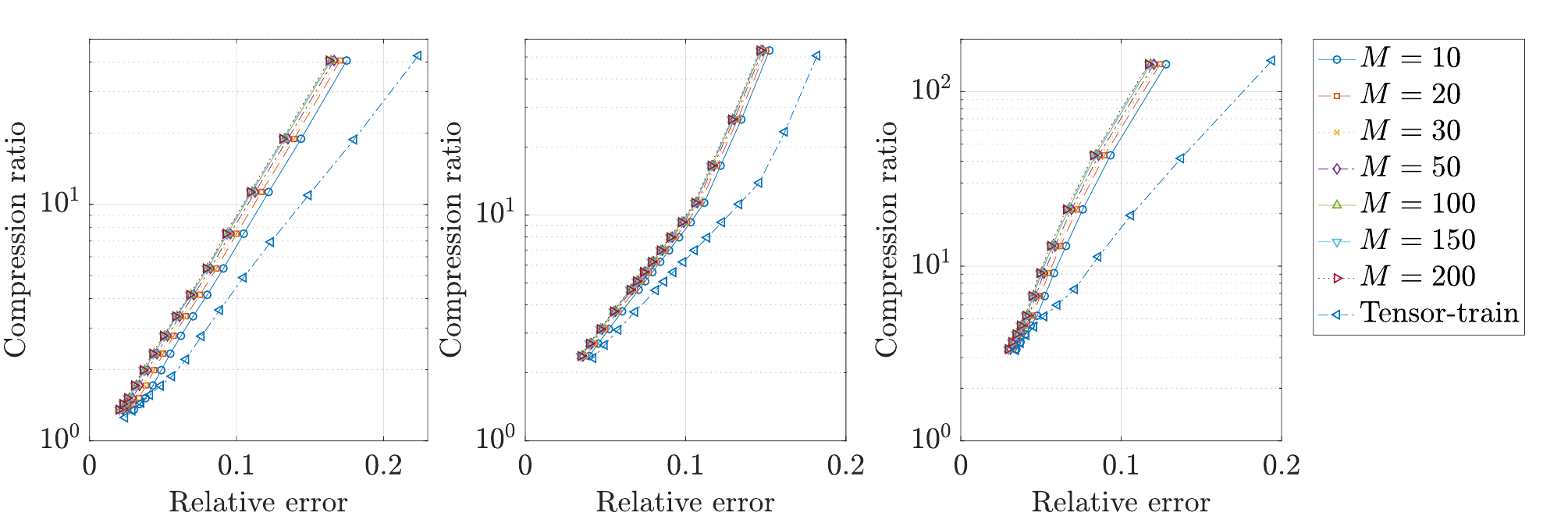}  \\
 \includegraphics[width=0.96\textwidth]{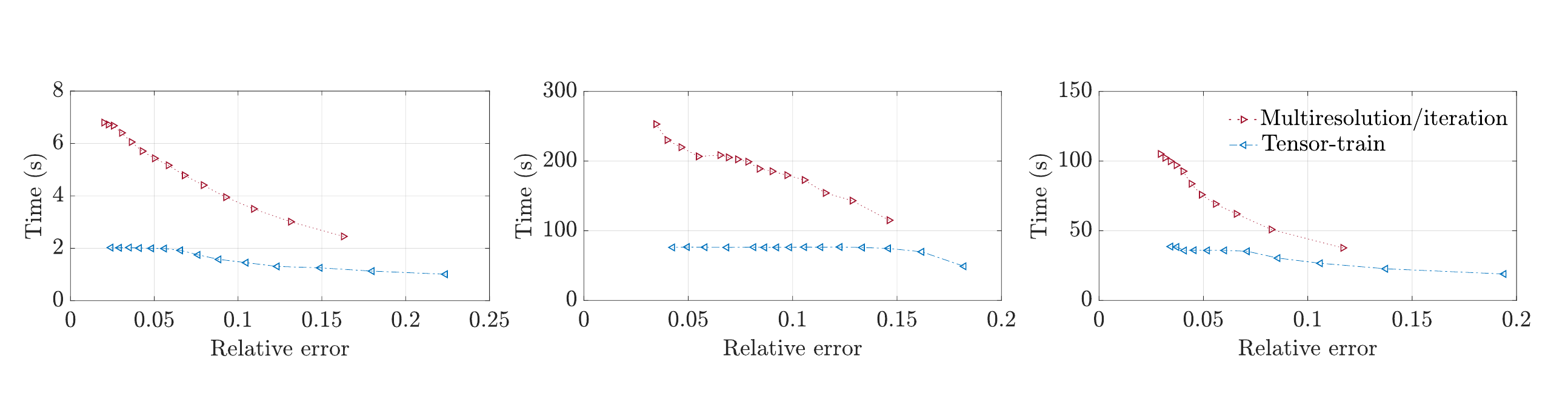}  \\
\end{tabular}
\caption{Top row: One wavelength of the hyperspectral images from \cite{urbandataset}, \cite{avirisdataset}, \cite{samsondataset}, respectively. Middle row: compression ratios of the tensors in \cite{urbandataset}, \cite{avirisdataset}, \cite{samsondataset}, respectively, as functions of approximation error and maximum number of iterations $M$. Bottom row: runtimes as functions of approximation error.} \label{fig:ex_hyperspectral_results}
\end{figure}

\subsection{Video data}
We consider video recordings of three different scenes \cite{li2004statistical}. These correspond to tensors in $\mathbb{R}^{256\times 256 \times 1280}$, $\mathbb{R}^{128\times 128 \times 128}$, and $\mathbb{R}^{128\times 128 \times 1280}$, respectively. The scenes exhibit multiple physical and temporal scales due to e.g., objects moving through the scenes at different speeds. Sample frames are shown together with the compression results in Fig.~\ref{fig:ex_video_results}. We used batch-size $b_s = 2$ and rank vector $(r, \ldots , r)$ for increasing values of $r$ and maximum number of iterations $M$. The multiresolution approximation achieves up to more than twice as high compression ratio as the tensor-train decomposition, over a wide range of accuracies.

\begin{figure}
\begin{tabular}{c}
 \includegraphics[width=0.96\textwidth]{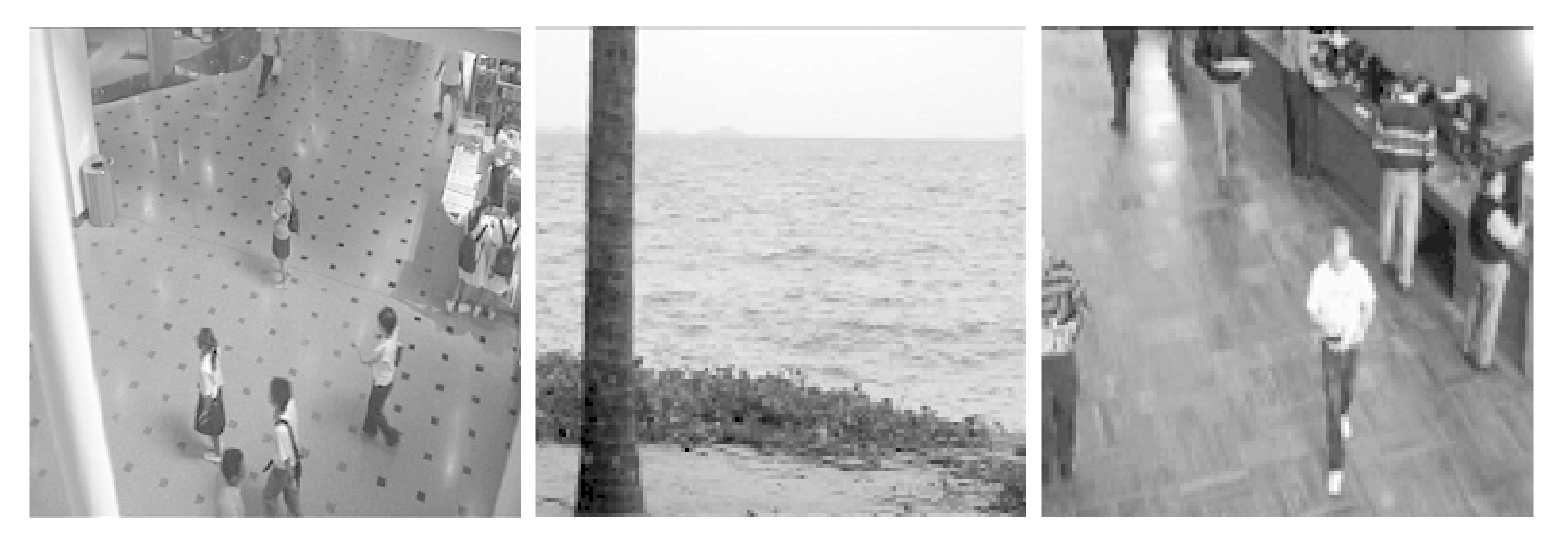} \\
  \includegraphics[width=0.96\textwidth]{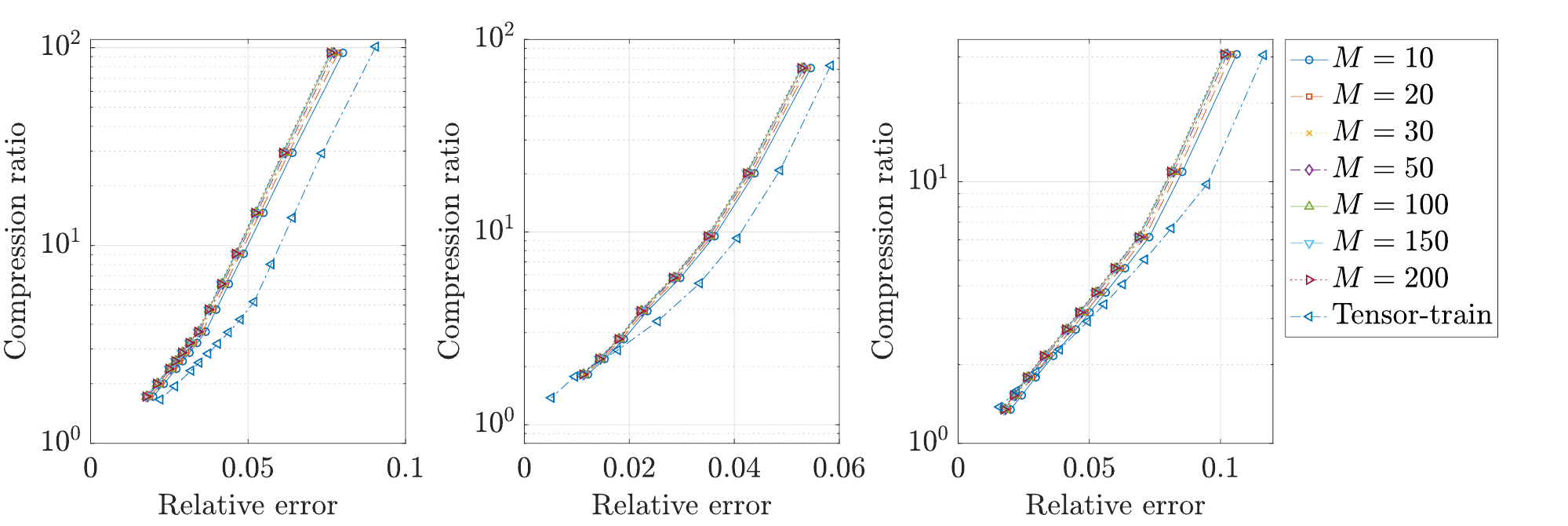}  \\
 \includegraphics[width=0.96\textwidth]{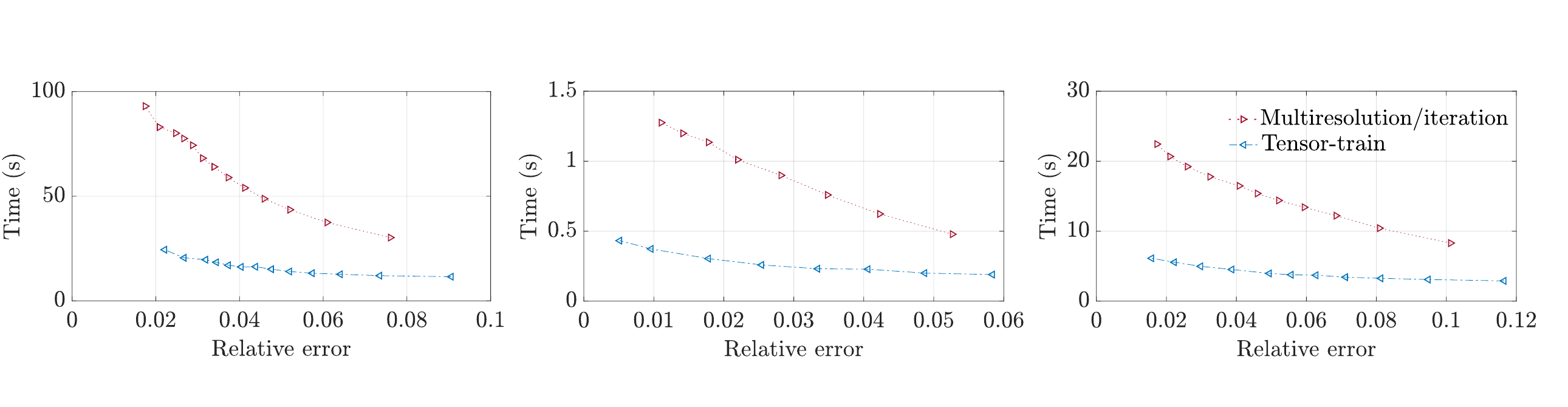}  \\
\end{tabular}
\caption{Top row: One frame of original videos. Middle row: compression ratios of the tensors as functions of approximation error and maximum number of iterations $M$. Bottom row: runtimes as functions of approximation error.} \label{fig:ex_video_results}
\end{figure}

A sample compressed scene is included in Movie~1\footnote{\href{https://github.com/MultiResTF/multiresolution/blob/master/movies/movie1.mp4}{\texttt{https://github.com/MultiResTF/multiresolution/blob/master/movies/movie1.mp4}}}, with a side-by-side comparison of the multiresolution approximation and the tensor-train approximation. The approximations shown achieve the same compression ratio, and the multiresolution approximation exhibits more clearly defined features and noticeably fewer artifacts of the approximation procedure, compared to the tensor-train approximation. Movie~2\footnote{\href{https://github.com/MultiResTF/multiresolution/blob/master/movies/movie2.mp4}{\texttt{https://github.com/MultiResTF/multiresolution/blob/master/movies/movie2.mp4}}} presents the approximation on each scale. The $i$th submovie contains the sum of the $i$ highest scales of the compressed format, i.e., $\sum_{k=0}^{i-1} \text{ext}_{L-k} \left(T_k\right)$.

\subsection{Multiresolution canonical decomposition}\label{sec:ex_video_candecomp}
We conclude this section with an example of compression into the multiresolution format, when the underlying tensor format is the canonical tensor format. The canonical format is not closed and there are no guarantees to find an optimal approximation in the format. However, Alg.~\ref{alg:AL-multi} can still be run with one of the standard approximation algorithms for approximation into each scale. We use the alternating least-squares algorithm \cite{TTB_Software,TTB_Dense,TTB_Sparse} with standard settings, on the first tensor in Fig.~\ref{fig:ex_video_results}. The results are shown in Fig.~\ref{fig:ex_video_candecomp}, with higher compression ratios for a given accuracy, compared to the non-multiresolution format.

\begin{figure}
\begin{center}
 \includegraphics[width=0.9\textwidth]{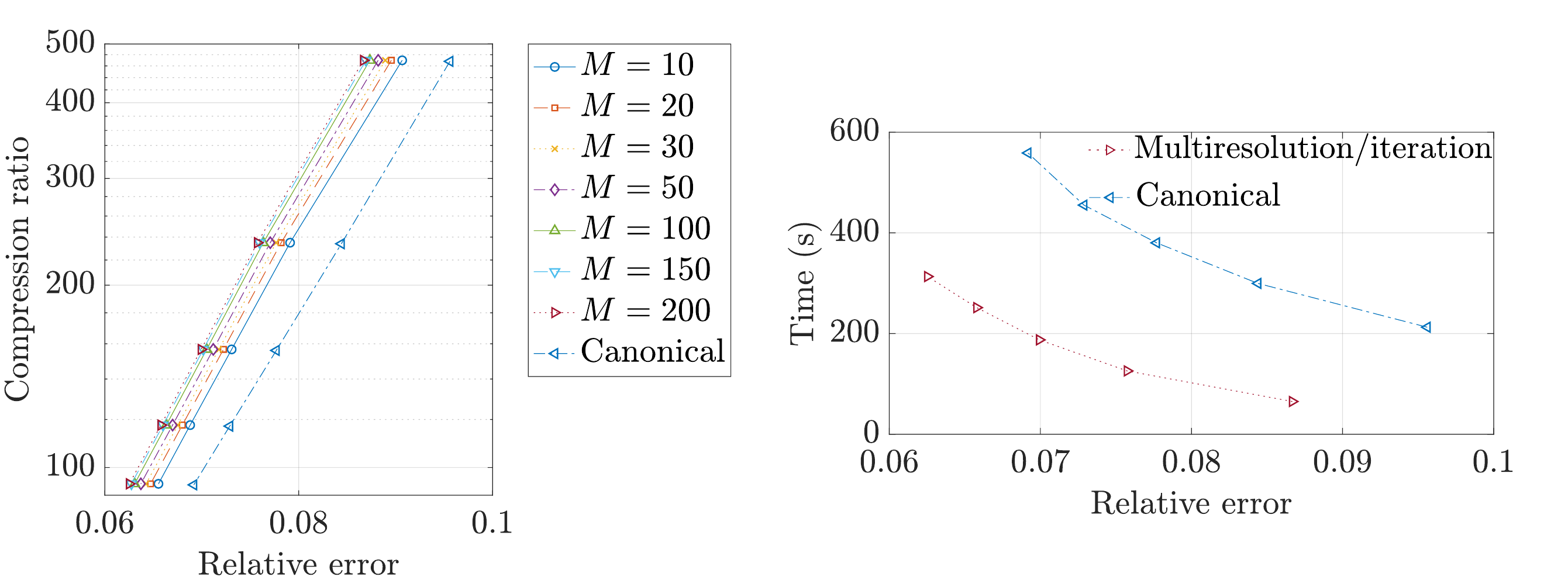}
\caption{Left: compression ratios of the tensor in \ref{sec:ex_video_candecomp} as function of approximation error and maximum number of iterations $M$. Right: runtimes as functions of approximation error.} \label{fig:ex_video_candecomp}
\end{center}
\end{figure}

\section{Conclusion}
We have studied a simple black-box tensor format for representing multidimensional data with multiple length-scales. An alternating algorithm for tensor approximation into this format was provided, and local convergence guarantees were proven. The closedness and stability properties of the format were also characterized. The efficiency of the format was numerically verified on six real-world datasets, achieving compression ratios several times higher than their counterparts for the tensor-train format, at the expense of higher run-times.

\appendix

\section{Proof of Thm.~\ref{thm:closed}}\label{appendix:proof_closed}
We prove (1) first. Let $k$ be the highest index of the rank-vector for which $r_k \neq 0$. Note that the multiresolution format with rank vector $(r_0,\ldots, r_{k-1}, b_s^{k}, 0, \ldots , 0)$ is precisely the set of matrices $\text{ext}_{L-k}(T)$ for $T$ any matrix in $\mathbb{R}^{b_s^k \times b_s^k}$. It therefore coincides with the multiresolution format with rank vector $(0,\ldots, 0, b_s^{k}, 0, \ldots , 0)$, so we assume that $\mathbf{r} \neq (0, 0, \ldots, 0, r_k, 0, \ldots , 0)$ and $r_k < b_s^k$. There is then some $r_{i} > 0$ with $i < k$. We will produce a matrix $T$ in $\overline{MS_{\text{TT}_\mathbf{r}}} \setminus MS_{\text{TT}_\mathbf{r}}$, and first introduce some auxiliary variables for the construction.

Denote by $v^{(n)}$ the vector $v^{(n)} = \begin{bmatrix}
\sqrt{n+1} &
\sqrt{n-1} &
\ldots &
\sqrt{n-1}
\end{bmatrix}^T,
$ and by $u$ the vector $u =  (\underbrace{1,1,\ldots , 1}_{b_s-1}, 1- \frac{b_s}{b_s-1}).$ Let $m = \floor*{\frac{r_k+1}{b_s}}$ and $w$ a vector of length $\abs{w}=r_k+1-mb_s$ defined by $w = (1, \ldots, 1, 1 - \frac{\abs{w}}{\abs{w}-1})$. The $b_s^k \times b_s^k$-matrix with all entries equal to $n$ is of the form $\text{ext}_{k-i}(S)$ with $S$ the $b_s^i \times b_s^i$-matrix with all entries equal to $n$. Viewing this matrix as a contribution from scale $i$, we study the matrix $\text{ext}_{L-k}(T^{(n)})$, where
\begin{align}
T^{(n)} = \begin{bmatrix}
n & \cdots & n \\
\vdots & \ddots & \vdots \\
n & \cdots & n
\end{bmatrix} - v^{(n)}\otimes v^{(n)} \! \! - \text{diag}(0,0,\underbrace{1,1,\ldots , 1}_{b_s-2}, \underbrace{u, \ldots , u}_{m-1}, w, \!\!\! \! \underbrace{0, \ldots , 0}_{b_s^k - (r_k+1) \geq 0} \!\!\! \!),
\end{align}
Since the second two terms have rank at most $r_k$, they are contained in the tensor format on the $k$:th scale. It follows that $\text{MS}_{\text{TT}_\mathbf{r}}$ contains $\text{ext}_{L-k}(T^{(n)})$. As $n \rightarrow \infty$, $n - \sqrt{n+1}\sqrt{n-1} = \frac{n^2 - (n^2-1)}{n + \sqrt{n^2-1}} \rightarrow 0$, so $T^{(n)}$ tends to the matrix $T$ defined by
\begin{equation}
T = \begin{bmatrix}
-1 & 0 & \cdots & 0 \\
0 & 1 & \cdots  & 1 \\
\vdots & \vdots & \ddots &\vdots  \\
0 & 1 & \cdots & 1
\end{bmatrix} - \text{diag}(0,0,\underbrace{1,1,\ldots , 1}_{b_s-2}, \underbrace{u, \ldots , u}_{m-1}, w, \underbrace{0, \ldots , 0}_{b_s^k - (r_k+1) \geq 0}).
\end{equation}
To conclude the proof, we show that $\text{ext}_{L-k}(T)$ is not contained in $MS_{\text{TT}_\mathbf{r}}$. Note that any sum of the form $\sum_{m = 0}^{k-1} \text{ext}_{k-m}(T_m)$ is necessarily constant on all batch-blocks of size $b_s$. By this, we mean the sub-matrices with row-indices and column-indices $i,j$ contained between two consecutive multiples of $b_s$. It therefore suffices to show that $\text{rank}(T+S) \geq r_k + 1$ for any matrix $S$ constant on all batch-blocks. We verify this by reducing $T+S$ to row-echelon form. 

We first decompose $T+S$ into its batch-blocks, i.e., write
\begin{equation}
T +S =  \begin{bmatrix}
M_{1,1} & M_{1,2} & \cdots & M_{1,b_s^{k-1}} \\
M_{2,1} & M_{2,2} & \cdots  & M_{2,b_s^{k-1}} \\
\vdots & \vdots & \ddots &\vdots  \\
M_{b_s^{k-1},1}& M_{b_s^{k-1},2} & \cdots & M_{b_s^{k-1},b_s^{k-1}},
\end{bmatrix} 
\end{equation}
with each $M_{i,j}$ of size $b_s\times b_s$. Here, each lower triangular batch-block $M_{i,j}$ for $i > j$ has constant rows, and each upper triangular batch-block $M_{i,j}$ for $i < j$ has constant columns. We will show that $T+S$ has rank at least $r_k+1$ by showing that the lower triangular batch-blocks can be reduced to $0$ by row operations, and by counting the pivots of the diagonal batch-blocks $M_{i,i}$. We will treat the case $i=1$ and $i>1$ separately, and start by showing that $M_{1,1}$ has full rank, by induction on $b_s$. For some constant $a$, $M_{1,1}$ can be written as
\begin{equation}
M_{1,1}=\begin{bmatrix}
a - 1 & a & a & \ldots & a  \\
a& a+1 & a+1 & \ldots & a+1  \\
a& a +1& a & \ldots & a+1 \\
\vdots & \vdots & & \ddots & \vdots  \\
 a & a+1 & a +1& \ldots & a 
 \end{bmatrix}.
\end{equation}
Subtracting the second row from the last and using the last row to eliminate the last column reduces $M_{1,1}$ to the block-form
\begin{equation}
\begin{bmatrix}
a - 1 & a & a & \ldots & a & 0 \\
a& a+1 & a+1 & \ldots & a+1 & 0 \\
a& a +1& a & \ldots & a+1 & 0\\
\vdots & \vdots & & \ddots & \vdots  \\
 a & a+1 & a +1& \ldots & a &0\\
0 & 0 & 0 & \ldots & 0& -1
 \end{bmatrix},
\end{equation}
which has full rank, by the induction hypothesis. The base case $b_s = 2$ corresponds to $M_{1,1} = \bigl[ \begin{smallmatrix} a-1 & a \\ a & a+1\end{smallmatrix}\bigr] $. This has full rank, which concludes the proof that $M_{1,1}$ has full rank. 

Moreover, since the batch-blocks $M_{1,2}, M_{1,3}, \ldots , M_{1,b_s^{k-1}}$ have constant columns, the row operations that reduce $M_{1,1}$ to row-echelon form preserve these constant columns. We can continue the row operations to reduce $M_{2,1}, M_{3,1} , \ldots M_{b_s^{k-1},1}$ to zero. Since each $M_{2,1}, M_{3,1} , \ldots M_{b_s^{k-1},1}$ has constant rows, after these operations, $M_{2,2}$ can still be written on the form

\begin{equation}
M_{2,2} = \begin{bmatrix}
c & c+1 & \ldots & c+1\\
c+1 & c & \ldots  &  \vdots \\
\vdots & \vdots & \ddots & c+1  \\
c+1& c+1  & \ldots & c + \frac{b_s}{b_s-1}
\end{bmatrix},
\end{equation}
for some real number $c$. Subtracting the last row from each preceding row and then subtracting from the last row $c+1$ times each preceding row results in

\begin{equation}
\begin{bmatrix}
-1 &  0&  \ldots &    \frac{-1}{b_s-1} \\
  0 &  -1&   \ldots  &   \frac{-1}{b_s-1} \\
 \vdots & \vdots &  \ddots & \vdots \\
0&  0& \ldots &   \frac{1}{b_s-1} 
\end{bmatrix},
\end{equation}
with an additional $b_s$ pivots. Again, the row operations used in the reduction of $M_{2,2}$ do not change the fact that $M_{2,3}, M_{2,4}, \ldots , M_{2, b_s^{k-1}}$ all have constant columns.

We iterate this argument for $i = 2, 3, \ldots, m+1$, and count the number of pivots. It follows that $S+T$ has rank no less than $b_s + (m-1)b_s + r_k +1 - mb_s  =  r_k +1$, which concludes the proof of the first assertion.

For the second statement, let $T$ and $T^{(n)}$ be as above and write $S = T\otimes e_1^{\otimes d - 2}$, $S^{(n)} = T^{(n)}\otimes e_1^{\otimes d - 2}$. Here, each $T^{(n)}$ is a matrix of rank at most $(r_k)_1$. Each $S^{(n)}$ can therefore be written in the TT-format with rank vector $(1,(r_k)_1,1\ldots , 1)$, so clearly $S^{(n)} \in MS_{TT_{\mathbf{r}}}$. Moreover, $S^{(n)} \rightarrow S$. If $\text{ext}_{L-k}(S)$ were in $MS_{TT_{\mathbf{r}}}$, i.e., $S$ were expressible in the form $S = \sum_{m=0}^{k} \text{ext}_{k-m}(S_m)$ with $\text{rank}_{TT}(S_m) \leq r_m$, then it would follow that
\begin{equation}
\begin{split}
T &= S\times_3 e_1 \times_4 \ldots \times_d e_1 = \sum_{m=0}^{k} \text{ext}_{k-m}(S_k)\times_3 e_1 \times_4 \ldots \times_d e_1 \\
&= \sum_{m=0}^{k} \text{ext}_{k-m}(S_k\times_3 e_1 \times_4 \ldots \times_d e_1), 
\end{split}
\end{equation}
so the matrix $T$ would be in $MS_{\mathbf{q}}$ with the rank vector $\mathbf{q} = ((r_0)_1, \ldots , (r_k)_1, 0, \ldots , 0)$. This contradicts the first statement and concludes the proof.

\section{Local convergence of Alg.~\ref{alg:AL-multi} with restructured sweeping order}\label{sec:local}
We will consider a modified version of Alg.~\ref{alg:AL-multi}, with differently structured sweeps. This makes it slower than Alg.~\ref{alg:AL-multi}, but both algorithms achieve similar compression ratios in our examples. 

Alg.~\ref{alg:AL-multi} improves each scale in every iteration. We now consider a modification that improves only one scale until convergence, and then moves on to the remaining scales successively. In detail, fix a maximum iteration number $M$. For each $k= 0, 1, \ldots , L$ in turn, the procedure computes an approximation $T_k^{(n)}$, for $n=1, \ldots , M$ on the $k$:th level, by calling Alg.~\ref{alg:AL-multi} on the tensor $T - \sum_{\ell = 0}^{k-1} \text{ext}_{L-\ell}(T_\ell^{(M)})$ with the two-level rank vector
\begin{equation}
(\underbrace{0, \ldots, 0}_{k-1}, r_k, \underbrace{0, \ldots , 0}_{L-k-1}, r_{k+1} + \ldots  + r_L).
\end{equation}
This produces a tensor $T_k^{(M)}$ on scale $k$ and a tensor $S^{(M)}_k$ on scale $L$. $T_k^{(M)}$ is stored and $S_k^{(M)}$ is discarded. Sweeping through all scales results in the multiresolution approximation $\sum_{\ell = 0}^{L} \text{ext}_{L-\ell}(T_\ell^{(M)}).$

We will consider tensors with a globally optimal approximation $\sum_{k = 0}^{L} \text{ext}_{L-k}(T_k)$ in the multiresolution format. Denote by $R$ the optimal approximation residual, i.e., $R = T - \sum_{k = 0}^{L} \text{ext}_{L-k}(T_k)$. While updating scales $k$ and $L$ during iteration $n$ of a run of the algorithm, the update equations in Lemma~\ref{lemma:updateS} read as
\begin{align*}
T_{k}^{(n+1)} &= \text{round}_{\mathcal{F}_{r_k}}\Bigl( \text{ave}_{L-k}\bigl(T - \sum_{\ell=0}^{k-1} \text{ext}_{L-\ell}(T_{\ell}^{(M)}) - S_k^{(n)}\bigr) \Bigr)  \\
&=  \text{round}_{\mathcal{F}_{r_k}}\Bigl( T_k + \text{ave}_{L-k}\bigl(  R - S_k^{(n)}\\
& \qquad \qquad \qquad + \sum_{\ell=k+1}^{L} \!\!\! \text{ext}_{L-\ell}(T_{\ell})  + \!\! \sum_{\ell=0}^{k-1}  \!\! \text{ext}_{L-\ell}(T_{\ell}- T^{(M)}_\ell) \bigr) \Bigr),\tag{\stepcounter{equation}\theequation} \\
S_k^{(n+1)} &= \text{round}_{\mathcal{F}_{r_{k+1} + \ldots + r_L}}\Bigl( T - \sum_{\ell=0}^{k-1} \text{ext}_{L-\ell}(T_{\ell}^{(M)}) - T_k^{(n+1)}\Bigr) \\
&= \text{round}_{\mathcal{F}_{r_{k+1} + \ldots + r_L}}\Bigl( \sum_{\ell=k+1}^{L} \!\!\! \text{ext}_{L-\ell}(T_{\ell}) \\
& \qquad \qquad \qquad  + R + \text{ext}_{L-k}(T_k- T_k^{(n+1)}) + \sum_{\ell=0}^{k-1}  \text{ext}_{L-\ell}(T_{\ell}- T^{(M)}_\ell ) \Bigr) .
\end{align*}

The key step of the convergence proof will be to analyze the following two amalgamated residual terms
\begin{equation}\label{eq:errors}
\begin{split}
E_{k}^{(n)} &:= \text{ave}_{L-k}\left( R - S^{(n)}_k + \sum_{\ell=k+1}^{L} \text{ext}_{L-\ell}(T_{\ell}) + \sum_{\ell=0}^{k-1}  \text{ext}_{L-\ell}(T_{\ell}- T^{(M)}_\ell)  \right), \\
D_{k}^{(n)} &:= R + \text{ext}_{L-k}(T_k- T_k^{(n+1)}) + \sum_{\ell=0}^{k-1}  \text{ext}_{L-\ell}(T_{\ell}- T^{(M)}_\ell ) .
\end{split}
\end{equation}
The idea of the proof is to show that our algorithm is locally a contraction, when the two residual terms $E_{k}^{(n)}$ and $D_{k}^{(n)}$ are sufficiently small. We will prove local convergence under the following three assumptions.
\begin{assumption}[existence of approximations]\label{ass:smooth}
The set of tensors $\mathcal{F}_{r}$ is a weakly closed, smooth manifold embedded in $\mathbb{R}^{b_s^L\times \ldots \times b_s^L}$ for any $r$.
\end{assumption}
\begin{assumption}[minimality]\label{ass:rank}
The tensors $\text{ext}_{L-k-1}(T_{k+1}) + \ldots + T_L$ have rank $r_{k+1} + \ldots + r_L$, for each $k = 0, \ldots, L$. 
\end{assumption}
\begin{assumption}[first-order analysis]\label{ass:sep}
There is an angle $\theta > 0$ such that, for any $n$ and $k$, the amalgamated errors $E_{k}^{(n)}$ and $D_{k}^{(n)}$ subtend an angle greater than $\theta$ to the tangent space of $\mathcal{F}_{r_k}$ at the point $T_k$, and of $\mathcal{F}_{r_{k+1} + \ldots + r_L}$ at the point $\text{ext}_{L-k-1}(T_{k+1}) + \ldots + T_L$, respectively.
\end{assumption}

The first assumption is required for optimal approximations in $\mathcal{F}_r$ to exist. The second assumption excludes non-minimal examples. For instance in the matrix case, if $r_0 = 1$, and $r_1 = b_s$, then $\text{ext}_{L}(T_0) + \text{ext}_{L-1}(T_1)$ also has rank $b_s \neq r_0 + r_1$. It would therefore be desirable for a properly designed algorithm to converge to a tensor in the multiresolution format with rank vector $(0, r_1, \ldots, r_L)$ instead of $(r_0, r_1, \ldots, r_L)$, to reduce storage cost. We therefore exclude these cases from consideration by imposing Assumption~\ref{ass:rank}. The third assumption is technical and made so that we can use first-order perturbation expansions as part of our analysis. It can be viewed as a non-degeneracy condition. Even though it is not verifiable a priori, should it not hold at iteration $n$, our algorithm is still nearly a contraction, with an error that can grow additively at most by a term proportional to $\|T - \sum_{k = 0}^{L} \text{ext}_{L-k}(T_k)\|^2$, as will be shown in Lemma~\ref{lemma:contract} below. Our main result is the following.

\begin{theorem}\label{thm:locconv}
Let $T$ be a tensor with an isolated, globally optimal approximation $\sum_{k=0}^L \text{ext}_{L-k} (T_k) $ in the multiresolution format with rank vector $(r_0, \ldots , r_k)$. Under assumptions \ref{ass:smooth} -- \ref{ass:sep}, there is then a constant $C > 0$ depending only on $T_0, \ldots , T_L$ such that
\begin{enumerate}
\item if $  T =  \sum_{k=0}^L \text{ext}_{L-k} (T_k)$ and $\| T_k - T_k^{(0)} \| \leq C$, then when using the algorithm of this section, $T^{(n)}_k \rightarrow T_k$ linearly as $n\rightarrow \infty$.
\item More generally, if
\begin{equation}
\begin{split}
\|T - \sum_{k=0}^L \text{ext}_{L-k} (T_k) \| \leq C, \\
\| T_k - T_k^{(0)} \| \leq C,
\end{split}
\end{equation}
then when using the algorithm of this section,
\begin{equation}\label{eq:locbound}
\| T_k - T_k^{(n)}\| \leq C_k \|T - \sum_{k=0}^L \text{ext}_{L-k} (T_k)\|
\end{equation}
for all $n$ large enough and some real constants $C_k$. In particular, there is a convergent subsequence $T^{(n_\ell)}_k$ such that $T^{(n_\ell)}_k \rightarrow S_k$ for all $k=0, \ldots , L$, where
\begin{equation}\label{eq:locboundconv}
\| T_k - S_k\| \leq C_k \|T - \sum_{k=0}^L \text{ext}_{L-k} (T_k)\|.
\end{equation}
\end{enumerate}
\end{theorem}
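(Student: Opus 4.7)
The plan is to linearize both rounding steps of the inner two-level sweep, so that the algorithm becomes a perturbed contraction whose source term is controlled by the optimal residual $R$. By Assumption~\ref{ass:smooth}, each $\mathcal{F}_r$ is a smooth submanifold, so the standard expansion of the nearest-point map reads $\text{round}_{\mathcal{F}_r}(X+\varepsilon) = X + \pi_X(\varepsilon) + O(\|\varepsilon\|^2)$ for any $X\in\mathcal{F}_r$, where $\pi_X$ is the orthogonal projection onto the tangent space $T_X\mathcal{F}_r$. The relevant expansion points are $T_k$ for the scale-$k$ update and $U_k := \sum_{\ell>k}\text{ext}_{L-\ell}(T_\ell)$ for the scale-$L$ auxiliary; $U_k$ lies in $\mathcal{F}_{r_{k+1}+\cdots+r_L}$ by Assumption~\ref{ass:rank}, which makes it a legitimate expansion point.

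Fix an outer index $k$ and set $\Delta T_k^{(n)} := T_k^{(n)} - T_k$, $\Delta S_k^{(n)} := S_k^{(n)} - U_k$, and $P_k := \sum_{\ell<k}\text{ext}_{L-\ell}(T_\ell - T_\ell^{(M)})$ for the prior-scale error (frozen throughout the inner loop). In these variables the amalgamated errors of Eq.~\eqref{eq:errors} simplify to $E_k^{(n)} = \text{ave}_{L-k}(R - \Delta S_k^{(n)} + P_k)$ and $D_k^{(n)} = R - \text{ext}_{L-k}(\Delta T_k^{(n+1)}) + P_k$. Using the first-order expansion, Assumption~\ref{ass:sep} to bound each projected perturbation by $\cos(\theta)$ times its norm, and the Cauchy--Schwarz identities $\|\text{ave}_\ell(X)\|\leq b_s^{-\ell d/2}\|X\|$ and $\|\text{ext}_\ell(Y)\| = b_s^{\ell d/2}\|Y\|$, substitution yields
\begin{equation*}
\|\Delta S_k^{(n+1)}\| \leq \cos^2(\theta)\,\|\Delta S_k^{(n)}\| + c_1\bigl(\|R\| + \|P_k\|\bigr) + O\!\left(\bigl(\|\Delta S_k^{(n)}\| + \|R\| + \|P_k\|\bigr)^{2}\right),
\end{equation*}
a strict contraction with ratio $\cos^2(\theta)<1$ plus an inhomogeneous source. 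Iterating over $n=0,\dots,M$ and feeding the outcome back into the $T_k$-step gives $\|\Delta T_k^{(M)}\| \leq c_2\,\gamma^M\|\Delta S_k^{(0)}\| + c_3\bigl(\|R\| + \|P_k\|\bigr)$ for some $\gamma\in (0,1)$, modulo higher-order corrections negligible in the small-error regime. This is the contraction lemma alluded to as Lemma~\ref{lemma:contract} in the main text.

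The outer sweep is then handled by induction on $k$. Since $\|P_{k+1}\| \leq \|P_k\| + b_s^{(L-k)d/2}\|\Delta T_k^{(M)}\|$, the inductive hypothesis $\|\Delta T_\ell^{(M)}\| \leq C_\ell\|R\|$ for all $\ell\leq k$ propagates to $\|P_{k+1}\| \leq C'_{k+1}\|R\|$, which, combined with the contraction estimate, yields $\|\Delta T_{k+1}^{(M)}\|\leq C_{k+1}\|R\|$ provided $M$ is large enough that the $\gamma^M$ term is dominated by the source. This establishes Eq.~\eqref{eq:locbound}. Part (1) is the specialization $R=0$ with no source: the pure contraction $\|\Delta S_k^{(n+1)}\| \leq \cos^2(\theta)\|\Delta S_k^{(n)}\|$ delivers linear convergence $T_k^{(n)} \to T_k$. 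For the subsequential convergence in (2), the sequence $T_k^{(n)}$ lies eventually in the closed, bounded set $\{X\in\mathcal{F}_{r_k} : \|X-T_k\|\leq C_k\|R\|\}$, which is compact by Assumption~\ref{ass:smooth}, so Bolzano--Weierstrass extracts a subsequence converging to some $S_k\in\mathcal{F}_{r_k}$ satisfying Eq.~\eqref{eq:locboundconv}.

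The main obstacle will be controlling the higher-order remainders of the first-order expansion uniformly across iterations, so that the linearized recursion genuinely governs the dynamics rather than being overwhelmed by quadratic corrections. This forces the neighborhood radius $C$ to be chosen small in terms of the reach of each $\mathcal{F}_r$ at the expansion points, the angle $\theta$, and the depth $L$, together with an auxiliary check that all iterates remain in the regime where Assumption~\ref{ass:sep} holds; once this is in place the remainder of the argument is a careful but mechanical bookkeeping of constants through the induction on $k$.
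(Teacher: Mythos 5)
Your proposal is correct and follows essentially the same route as the paper: your first-order expansion of the nearest-point map, combined with the angle bound of Assumption~\ref{ass:sep}, is precisely the content of the paper's Lemma~\ref{lemma:contract}, and your chaining of the two projection steps into a $\cos^2\theta$-contraction with an $\|R\|+\|P_k\|$ source, followed by induction on $k$ and Bolzano--Weierstrass for the subsequence, mirrors the paper's argument step for step. The ``main obstacle'' you flag --- controlling the quadratic remainder of the expansion uniformly in a fixed neighborhood --- is exactly what the bulk of the paper's proof of Lemma~\ref{lemma:contract} establishes (via the $\varepsilon$-neighborhood theorem and the $y_p,y_i,y_\ell$ decomposition), so your outline correctly identifies where the real work lies.
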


Assumption \ref{ass:smooth} is satisfied e.g., for the low-rank matrix format as well as for the TT-format \cite{holtz2012manifolds,steinlechner2016riemannian}. However, for the TT-format, note that the conclusion in Thm.~\ref{thm:locconv} applies when finding optimal low-rank approximations in the algorithm, rather than the quasi-optimal ones returned by the TT-SVD algorithm.

The result of Thm.~\ref{thm:locconv} is illustrated in Fig.~\ref{fig:ex_locconv}, for a matrix $T =\! \sum_{k=0}^L \text{ext}_{L-k} (T_k)$ and rank-vector $(0,0,0,8,0,10,10,10)$, which satisfy Assumption~\ref{ass:rank}. The matrices $T_k$ on each grid-scale were chosen with i.i.d. standard normal entries, and then normalized to have $\|T_k\| = 1$. The initial guess $\|T_k^{(0)}\|$ on each scale was chosen also with i.i.d. standard normal entries, with $\|T_k - T_k^{(0)}\| = 0.1$. We used $b_s = 2$ and $n = 128$.

\begin{figure}
\begin{center}
  \includegraphics[width=1.0\textwidth]{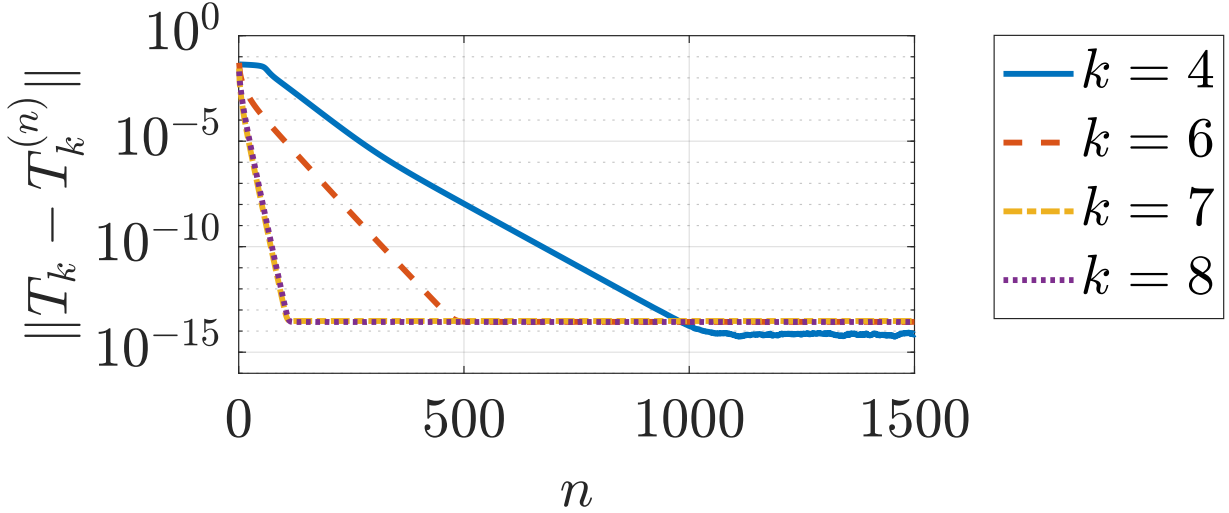}
  \caption{Example of local convergence guaranteed by Thm.~\ref{thm:locconv}.}
  \label{fig:ex_locconv}
\end{center}
\end{figure}

We will need the following Lemma.
\begin{lemma}\label{lemma:contract}
Let $x$ be a point on a smooth manifold $M$ embedded in $\mathbb{R}^N$, and $y$ a point in $\mathbb{R}^N$ s.t. the vector $y-x$ subtends an angle greater than $\theta$ with the tangent plane to $M$ at $x$. There is then a neighborhood of $x$ in $M$ and a constant $C<1$ which only depend on $x$ and $\theta$, such that the projection $\text{proj}_M$ onto the manifold, is uniquely defined in this neighborhood and satisfies
\begin{equation}
\| \text{proj}_M(y) - x \| \leq C \|y-x\|,
\end{equation}
\end{lemma}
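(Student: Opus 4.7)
The plan is to parameterize $M$ locally as a graph over its tangent space at $x$ and extract the contraction from the separation angle. First I would apply the tubular neighborhood theorem: on a small ball around $x$, points of $M$ are parameterized as $u \mapsto x + u + \phi(u)$ for $u$ in a neighborhood of $0$ in $T_xM$, where $\phi \colon T_xM \to N_xM$ is smooth with $\phi(0) = 0$ and $D\phi(0) = 0$. Smoothness of $M$ at $x$ then yields a constant $K$ depending only on $x$ such that $\|\phi(u)\| \leq K\|u\|^2$ and $\|D\phi(u)\| \leq K\|u\|$ on the chart. Decomposing $y - x = u_0 + v_0$ orthogonally with $u_0 \in T_xM$ and $v_0 \in N_xM$, the angle hypothesis immediately gives $\|u_0\| \leq \cos(\theta)\|y - x\|$.

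Next I would identify $m^* := \text{proj}_M(y)$ by minimizing $f(u) = \|u - u_0\|^2 + \|\phi(u) - v_0\|^2$ over the chart. Uniqueness of the minimizer $u^*$ on a neighborhood of $0$ follows from the implicit function theorem applied to the stationarity map $u \mapsto u + D\phi(u)^T(\phi(u) - v_0)$, whose derivative at $(u, v_0) = (0, 0)$ is the identity; the size of this neighborhood depends only on $K$ and hence only on $x$. The stationarity equation $u^* - u_0 = -D\phi(u^*)^T(\phi(u^*) - v_0)$, combined with the bounds on $\phi$ and $D\phi$, yields
\[
\|u^* - u_0\| \leq K\|u^*\|\bigl(K\|u^*\|^2 + \|v_0\|\bigr),
\]
so for $\|y - x\|$ small enough (uniformly over the chart) I obtain $\|u^*\| \leq (1 + \varepsilon)\|u_0\|$ for an arbitrarily prescribed $\varepsilon > 0$.

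Finally, since $\|m^* - x\|^2 = \|u^*\|^2 + \|\phi(u^*)\|^2 \leq \|u^*\|^2\bigl(1 + K^2\|u^*\|^2\bigr)$, shrinking the neighborhood further I conclude
\[
\|m^* - x\| \leq (1 + \varepsilon)\sqrt{1 + K^2\|u^*\|^2}\,\cos(\theta)\,\|y - x\|,
\]
and because $\theta > 0$, I can pick $\varepsilon$ and the radius small enough that the prefactor is bounded by a constant $C < 1$ depending only on $\theta$ and $K$, hence only on $x$ and $\theta$. The main obstacle is bookkeeping: one has to verify that the radius of the tubular neighborhood, the Taylor constant $K$, and the two shrinking factors that bring the prefactor below $1$ can all be chosen uniformly in $y$ and depend only on $x$ and $\theta$, which amounts to checking that the implicit function theorem and the Taylor remainders on $\phi$ can be made uniform on a fixed ball around $x$ — standard for smooth embedded submanifolds but the only quantitative step that requires care.
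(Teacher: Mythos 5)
Your proof is correct, and while the high-level geometric idea matches the paper's (write $M$ locally as a graph, split $y-x$ into tangential and normal parts, and use the angle hypothesis to contract the tangential part while showing the correction terms are quadratic), your execution differs in two substantive ways that make the argument considerably more streamlined. First, the paper parameterizes $M$ as $(z, f(z))$ over a coordinate plane chosen after permuting variables, so that $df(x_0)$ is generically nonzero and must be tracked throughout; you instead parameterize as $u \mapsto x + u + \phi(u)$ over $T_xM$ itself, so $D\phi(0) = 0$ and the first-order term disappears by construction. Second, the paper introduces four auxiliary points ($y_p$, $y_i$, $y_\ell$ alongside $\text{proj}_M(y)$), applies the triangle inequality, and laboriously estimates three of the four resulting terms as $O(\|y-x\|^2)$ — including a law-of-sines argument for $\|y_i - y_\ell\|$ — whereas you extract everything directly from the first-order stationarity equation $u^* - u_0 = -D\phi(u^*)^T(\phi(u^*) - v_0)$, which immediately yields $\|u^*\| \leq (1+\varepsilon)\|u_0\|$ after absorbing the small multiplicative factor. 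Your route also handles uniqueness via the implicit function theorem applied to the stationarity map rather than citing the $\varepsilon$-neighborhood theorem, though either is fine. One small remark on the paper's proof: its closing line sets $C = 1 - (\cos\theta + D\varepsilon)$, which is evidently a typo for $C = \cos\theta + D\varepsilon$, the constant your argument also produces (up to the cosmetic factor $\sqrt{1 + K^2\|u^*\|^2}$). In short, your approach buys a shorter, cleaner derivation by choosing the tangent space as the base of the graph, at the cost of nothing.
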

for all $y$ in the neighborhood.
\begin{proof}
Throughout the proof, we will denote by $D$ arbitrary constants that only depend on $x$ and $\theta$. For ease of notation, we will not keep track of the exact expression for $D$, and the constants can be redefined even on the same line.

By the assumptions and possibly after permuting the coordinates, there is a smooth function $f$ and a neighborhood of radius $r$ around $x$ wherein points on $M$ can be written as $(z,f(z))$. Write $x = (x_0, f(x_0))$. Moreover, by the $\varepsilon$-neighborhood theorem \cite[p.~69]{guillemin2010differential}, there is a neighborhood of radius $\eta$ around $x$ such that the projection onto $M$ is uniquely defined. Take $\varepsilon = \frac{1}{2}\text{min}(r,\eta)$.

Since $\text{proj}_M(y)$ is the closest point on $M$ to $y$ and hence closer than $x$, $\|\text{proj}_M(y) - y\| \leq \|x - y\|$. Any $y$ in the $\varepsilon$-neighborhood of $x$ then has $\text{proj}_M(y)$ of the form $(z_0, f(z_0))$ with $\|x_0 - z_0\| \leq 2\varepsilon$, since $\|x_0 - z_0\| \leq \|\text{proj}_M(y) - x\| \leq \|\text{proj}_M(y) - y\|  + \|x - y\| \leq 2\|x-y\| \leq 2\varepsilon \leq r$.

Since $f$ is smooth, the shift of the tangent space $T_xM$ to $x$ consists of all points of the form $\bigl(z, f(x_0) + df(x_0)(z-x_0)\bigr)$. There is also a constant $D$ such that
\begin{equation}\label{eq:helper_manif}
\| f(z) - f(x_0) - df(x_0)(z-x_0) \| \leq D \|z-x_0\|^2,
\end{equation}
for $z$ in the $r$-neighborhood of $x_0$. Denote now by $y_p$ the orthogonal projection of $y$ onto $T_x$. Let also $\text{proj}_M(y) = \bigl(z_0, f(z_0)\bigr)$ be the projection of $y$ onto $M$. Since $f$ is smooth, $d f$ varies smoothly so it follows that 
\begin{equation}\label{eq:z2x}
\begin{split}
\| d f(z_0) - d f(x_0)\| &\leq D \|z_0 - x_0 \| \leq D \| \text{proj}_M(y) - x\| \\
&\leq D\left( \| \text{proj}_M(y) - y \| + \| y - x\| \right) \leq 2D\|x-y\|.
\end{split}
\end{equation}
It follows that the line passing through $y$ and $\text{proj}_M(y)$ is not contained in $T_xM$. If it were, we would be able to write $y -\text{proj}_M(y) = (\xi, df(x_0)\xi)$. Since $y -\text{proj}_M(y)$ is orthogonal to $T_{\text{proj}_M(y)}M$, this would imply
\begin{equation}
\begin{split}
0 &= \langle  (\xi, df(x_0)\xi),  (\xi, df(z_0)\xi) \rangle \\
&= \langle  (\xi, df(x_0)\xi),  (\xi, df(x_0)\xi) \rangle + \langle  df(x_0)\xi,  (df(z_0) - df(x_0))\xi \rangle \\
& \geq \| (\xi, df(x_0)\xi) \|^2 \cdot (1 - \| (df(z_0) - df(x_0))\|) > 0,
\end{split}
\end{equation}
by Eq.~\eqref{eq:z2x} for $\varepsilon$ sufficiently small, which is a contradiction

The line through $y$ and $\text{proj}_M(y)$ therefore intersects the shift of $T_xM$ to $x$ at some point $y_i$. Lastly, we denote the lift of $\text{proj}_M(y)$ onto the shift of $T_xM$ to $x$ by $y_\ell =  \bigl(z_{0}, f(x_0) + df(x_0)(z_{0}-x_0)\bigr)$. We have
\begin{equation}
\begin{split}\label{eq:linearquadratic}
\| \text{proj}_M(y) - x \| &\leq \| \text{proj}_M(y) - y_\ell \| + \|y_\ell - y_i\| + \|y_i - y_p\| + \|y_p - x\|,
\end{split}
\end{equation}
and we proceed by showing that the first three terms are bounded by constants times $\|y - x\|^2$.

For the first term, Eq.~\eqref{eq:helper_manif} gives
\begin{equation}
\| \text{proj}_M(y) - y_\ell \| = \| f(z_0) - f(x_0) - df(x_0)(z-x_0) \| \leq D \|z_0-x_0\|^2.
\end{equation}
Now, we have $\|z_0 - x_0\| \leq \|\text{proj}_M(y) - x \| \leq \|\text{proj}_M(y) - y \| + \| y - x\| \leq 2 \|y - x\|$. It follows that
\begin{equation}\label{eq:1stterm}
\| \text{proj}_M(y) - y_\ell \| \leq D \|y - x\|^2.
\end{equation}

We next study the third term. The vector $y-y_i$ is orthogonal to any vector in $T_{\text{proj}_M(y)}M$, which are of the form $(\xi, df(z_0)\xi)$. Likewise, $y-y_p$ is orthogonal to $T_xM$, i.e. vectors of the form $(\xi, df(x_0)\xi)$. If we write $y-y_i = \bigl( (y - y_i)_1, (y-y_i)_2 \bigr)$, this means that
\begin{equation}
\begin{split}
0 &= \langle y-y_i, (\xi, df(z_0)\xi) \rangle - \langle y-y_p, (\xi, df(x_0)\xi) \rangle = \\
&= \langle y_p-y_i, (\xi, df(x_0)\xi) \rangle + \langle (y-y_i)_2, (df(z_0) - df(x_0))\xi \rangle 
\end{split}
\end{equation}
Since $y_p - y_i$ is in $T_xM$, it can be written on the form $y_p - y_i = \bigl((y_p - y_i)_1, df(x_0)(y_p - y_i)_1\bigr)$, so
\begin{equation}
\begin{split}
- (y-y_i)_2^T (df(z_0) - df(x_0))\xi &= - \langle (y-y_i)_2, (df(z_0) - df(x_0))\xi \rangle  \\
&=\langle y_p-y_i, (\xi, df(x_0)\xi) \rangle \\
&=  (y_p-y_i)_1^T\left[ I + df(x_0)^Tdf(x_0)\right]\xi,
\end{split}
\end{equation}
for any vector $\xi$. This implies
\begin{equation}
(df(z_0) - df(x_0))^T (y-y_i)_2 =  \left[ I + df(x_0)^Tdf(x_0)\right](y_p-y_i)_1.
\end{equation}
Since $I$ is positive definite and $df(x_0)^Tdf(x_0)$ is positive semidefinite, the matrix in the right hand side is positive definite and hence invertible. This results in
\begin{equation}\label{eq:3dtermhelp}
\|(y_p-y_i)_1\| \leq  \bigl\| \left[ I + df(x_0)^Tdf(x_0)\right]^{-1}\bigr\| \cdot \|(df(z_0) - df(x_0))^T\| \cdot \|(y - y_i)_2\|.
\end{equation}
We observe that
\begin{align}
&\|y_p - y_i\| = \|\bigl((y_p - y_i)_1, df(x_0)(y_p - y_i)_1\bigr)\| \leq (1+\|df(x_0)\|)\|(y_p - y_i)_1\| \\
&\|(y - y_i)_2\| \leq \|y - y_i\| \leq \|y - y_p\| + \|y_p - y_i\| \leq \|y-x\| + \|y_p - y_i \| \\
&\|(df(z_0) - df(x_0))^T\| \leq  D\|z_0 - x_0\| \leq 2D\|y-x\|,
\end{align}
where the second equation used the fact that $y_p$ is the closest point to $y$ on the shift of $T_xM$ to $x$ which also includes $x$, and the third equation used Eq.~\eqref{eq:z2x}. Inserting this into Eq.~\eqref{eq:3dtermhelp} results in
\begin{equation}
\begin{split}
\|y_p - y_i\| &\leq 2D(1+\|df(x_0)\|)\bigl\| \left[ I + df(x_0)^Tdf(x_0)\right]^{-1}\bigr\|  \cdot \\
&\cdot \|y-x\|  (\|x-y\| + \|y_p - y_i \|) \leq D \|x-y\|^2 + \varepsilon D \|y_p - y_i\|.
\end{split}
\end{equation}
Moving the last term to the left hand side shows that
\begin{equation}\label{eq:3dterm}
\|y_p - y_i\| \leq D\|x-y\|^2,
\end{equation}
for $\varepsilon$ sufficiently small.

We lastly study the second term. Note that the vector $y_i - y_\ell$ is in $T_xM$ and can therefore be written as $y_i - y_\ell = \bigl( (y_i - y_\ell)_1, df(x_0)(y_i - y_\ell)_1\bigr)$. Next write $y_i - \text{proj}_M(y) = \bigl( (y_i - \text{proj}_M(y))_1, (y_i - \text{proj}_M(y))_2 \bigr)$. Since this vector is orthogonal to $T_{\text{proj}_M(y)}M$
\begin{equation}
\begin{split}
0 &= (y_i - \text{proj}_M(y))\cdot \left(\xi, df(z_0)\xi\right) \\
&= (y_i - \text{proj}_M(y))\cdot \left(\xi, df(x_0)\xi\right) + (y_i - \text{proj}_M(y))_2\cdot  \left(df(z_0) - df(x_0)\right)\xi,
\end{split}
\end{equation} 
for any vector $\xi$. Taking $\xi = (y_{i} - y_{\ell})_1$ results in
\begin{equation}
0 = (y_i - \text{proj}_M(y))\cdot (y_i - y_\ell) + (y_i - \text{proj}_M(y))_2\cdot  \left(df(z_0) - df(x_0)\right) (y_{i} - y_{\ell})_1,
\end{equation}
so
\begin{equation}
\begin{split}
\abs{ (y_i - \text{proj}_M(y)) \cdot (y_i - y_\ell) } &\leq  \| (y_i - \text{proj}_M(y))_2\|   \| df(z_0) - df(x_0)\|  \|(y_{i} - y_{\ell})_1\| \\
&\leq D\| y_i - \text{proj}_M(y)\|   \| x - y\|  \|y_{i} - y_{\ell}\| 
\end{split}
\end{equation}
This implies that the angle $\alpha$ between the vectors $y_i - \text{proj}_M(y)$ and $y_i - y_\ell$ satisfies $\cos \alpha \leq D\|x-y\| \leq D\varepsilon$. By choosing $\varepsilon$ sufficiently small, we can therefore guarantee $\sin \alpha \geq \frac{1}{2}$. The law of sines for the triangle with vertices $y_i$, $y_\ell$ and $\text{proj}_M(y)$ therefore gives
\begin{equation}\label{eq:2ndterm}
\| y_i - y_\ell\| \leq \frac{\|y_\ell - \text{proj}_M(y) \|}{\sin \alpha} \leq 2D \|x-y\|^2.
\end{equation}

Inserting Eqs.~\eqref{eq:1stterm},\eqref{eq:3dterm},\eqref{eq:2ndterm} into Eq.~\eqref{eq:linearquadratic} results in
\begin{equation}
\begin{split}
\| \text{proj}_M(y) - x \| &\leq \| y_p - x \| + D\|y-x\|^2.
\end{split}
\end{equation}
By assumption, the angle between $y_p - x$ is at least $\theta$, so
\begin{equation}
\|y_p - x \| \leq \|y-x\| \cos \theta.
\end{equation}
By choosing $\varepsilon$ so that $D\varepsilon + \cos \theta < 1$, it follows that
\begin{equation}
\begin{split}
\| \text{proj}_M(y) - x \| &\leq \| y_p - x \| + D\|y-x\|^2 \leq (\cos \theta + D\|x-y\|)\|x-y\| \\
&\leq (\cos \theta + D\varepsilon)\|x-y\|,
\end{split}
\end{equation}
which finishes the proof with $C = 1 - (\cos \theta + D\varepsilon)$, since $0 < C < 1$.
\end{proof}
We can now prove the main result of this section.
\begin{proof}[Proof of Thm.~\ref{thm:locconv}]
Write $R = T - \sum_{k=0}^L \text{ext}_{L-k} (T_k)$, and first assume that $\|R\| \neq 0$. We proceed by induction and assume that the algorithm has computed $T^{(M)}_0$, $\ldots$ , $T^{(M)}_{k-1}$ with $\| T_\ell- T_\ell^{(M)}\| \leq C_k \|R\|$, for $\ell = 1, \ldots , k-1$. When computing $T^{(n)}_k$, the algorithm proceeds by the updates 
\begin{equation}
\begin{split}
T_{k}^{(n+1)} &=  \text{round}_{\mathcal{F}_{r_k}}\Bigl( T_k + E_k^{(n)} \Bigr), \\
S_k^{(n+1)} &= \text{round}_{\mathcal{F}_{r_{k+1} + \ldots + r_L}}\Bigl( \sum_{\ell=k+1}^{L} \!\!\! \text{ext}_{L-\ell}(T_{\ell}) + D_k^{(n)} \Bigr),
\end{split}
\end{equation}
where $E_k^{(n)}$ and $D_k^{(n)}$ are defined in Eq.~\eqref{eq:errors}. This implies
\begin{equation*}
\begin{split}
&T_k^{(n+1)} - T_k = \text{round}_{\mathcal{F}_{r_k}}\left( T_k + E_k^{(n)}\right) - T_k, \\
& S_k^{(n+1)} -\!\!\!\! \sum_{\ell=k+1}^{L} \!\! \text{ext}_{L-\ell}(T_{\ell}) = \text{round}_{\mathcal{F}_{r_{k+1} + \ldots + r_L}}\bigl( \!\!\! \sum_{\ell=k+1}^{L} \!\!\! \text{ext}_{L-\ell}(T_{\ell}) + D_k^{(n)} \bigr) \!- \!\! \!\!\sum_{\ell=k+1}^{L} \!\! \text{ext}_{L-\ell}(T_{\ell}),
\end{split}
\end{equation*}
and we now show that the norm of these residual terms are bounded by the norms of $E_k^{(n)}$ and $D_k^{(n)}$, respectively. Provided the residual terms are sufficiently small, Lemma~\ref{lemma:contract} implies
\begin{equation}\label{eq:krate}
\begin{split}
\| &T_{k}^{(n+1)} - T_{k} \| \leq (1-C)\|E_k^{(n)}\| \\
&= (1-C)\| \text{ave}_{L-k}(\sum_{\ell=k+1}^{L} \text{ext}_{L-\ell}\bigl(T_{\ell}) - S_k^{(n)} + R + \sum_{\ell=0}^{k-1}  \text{ext}_{L-\ell}(T_{\ell}- T^{(M)}_\ell) \Bigr)\| \\
&\leq  \frac{(1-C)}{b_s^{\frac{d}{2}(L-k)}} \|\sum_{\ell=k+1}^{L} \text{ext}_{L-\ell}(T_{\ell}) - S_k^{(n)} + R + \sum_{\ell=0}^{k-1}  \text{ext}_{L-\ell}(T_{\ell}- T^{(M)}_\ell)\|,
\end{split}
\end{equation}
where the first step used Assumption~\ref{ass:sep} and the third step used Cauchy-Schwarz. Similarly, using Assumptions~\ref{ass:sep} and \ref{ass:rank}, we obtain
\begin{equation}\label{eq:convS}
\begin{split}
\| &S_k^{(n+1)} - \!\! \sum_{\ell=k+1}^{L} \!\! \text{ext}_{L-\ell}(T_{\ell}) \| \\
&\leq (1-C)\| \text{ext}_{L-k}(T_k- T_k^{(n+1)}) + R  + \sum_{\ell=0}^{k-1}  \text{ext}_{L-\ell}(T_{\ell}- T^{(M)}_\ell )\| \\
&\leq  (1-C)b_s^{\frac{d}{2}(L-k)}  \|T_k- T_k^{(n+1)}\| + (1-C_T)\|R\| + (1-C) \sum_{\ell=0}^{k-1} b^{\frac{d}{2}(L-\ell)}_s C_\ell \|R\| \\
&\leq (1-C)^2  \|S_k^{(n)} - \sum_{\ell=k+1}^{L} \text{ext}_{L-\ell}(T_{\ell}) - R + \sum_{\ell=0}^{k-1}  \text{ext}_{L-\ell}(T_{\ell}- T^{(M)}_\ell)\| \\
&\qquad + (1-C)\|R\| + (1-C) \sum_{\ell=0}^{k-1} b^{\frac{d}{2}(L-\ell)}_s C_\ell \|R\| \\
& \leq (1-C)^2  \|S_k^{(n)} - \sum_{\ell=k+1}^{L} \text{ext}_{L-\ell}(T_{\ell}) \| + D\|R\|,
\end{split}
\end{equation}
since $\|T_\ell - T_\ell^{(M)}\| \leq C_\ell \|R\|$, by the induction hypothesis. Here, $D$ is a constant dependent on $C$, $\{C_\ell\}_{\ell=0}^{k-1}$ and $b_s$. It follows that
\begin{equation*}
\begin{split}
\| S_k^{(n+1)} &\!\!\!- \!\! \!\! \sum_{\ell=k+1}^{L}\!\! \text{ext}_{L-\ell}(T_{\ell}) \| \leq (1-C)^{2n+2}  \|S_k^{(0)} - \!\!\!\! \sum_{\ell=k+1}^{L} \!\! \text{ext}_{L-\ell}(T_{\ell}) \| + D\|R\| \!\! \sum_{m=0}^{n+1} (1-C)^{2m} \\
&\leq (1-C)^{2n+2}  \|S_k^{(0)} - \sum_{\ell=k+1}^{L} \text{ext}_{L-\ell}(T_{\ell}) \| + \frac{D}{(1-C)^2}\|R\|.
\end{split}
\end{equation*}

Letting $n \rightarrow \infty$ shows that $\| S_k^{(n+1)} - \sum_{\ell=k+1}^{L} \text{ext}_{L-\ell}(T_{\ell}) \| \leq (1+\frac{D}{(1-C)^2})\|R\|$ for $n$ large enough. Inserting this into Eq.~\eqref{eq:krate} shows that 
\begin{equation}
\begin{split}
\| T_{k}^{(n+1)} - T_{k} \| &\leq  \frac{(1-C)}{b_s^{\frac{d}{2}(L-k)}} \left[ 2+\frac{D}{(1-C)^2}+ \sum_{\ell=0}^{k-1} b^{\frac{d}{2}(L-\ell)}_s C_\ell\right] \|R\|,
\end{split}
\end{equation}
which concludes the induction hypothesis and therefore the proof of the first statement. For the second statement, note that this implies that the $T_k^{(n)}$ are bounded, so there is a convergent subsequence by the Bolzano-Weierstrass theorem. Eq.~\eqref{eq:locboundconv} then follows from taking the limit of Eq.~\eqref{eq:locbound}.

Lastly, assume that $\|R\| = 0$. Eqs.~\eqref{eq:convS} and \eqref{eq:krate} imply that 
\begin{equation}
\begin{split}
\| T_{k}^{(n+1)} - T_{k} \| \leq (1-C)^{2n+3}  \|S_k^{(0)} - \sum_{\ell=k+1}^{L} \text{ext}_{L-\ell}(T_{\ell}) \|,
\end{split}
\end{equation}
which concludes the proof.
\end{proof}

\pagebreak
\bibliographystyle{siamplain}
\bibliography{references}

\end{document}